\documentclass[reqno, 11pt]{amsart}
\usepackage[utf8]{inputenc}
\usepackage{amssymb,amsmath,amsfonts,amsthm,calrsfs,mathtools}
\usepackage[all]{xy}
\usepackage{color}
\usepackage[english]{babel}
\usepackage[T1]{fontenc}
\usepackage{multicol}
\usepackage{latexsym}
\usepackage{enumitem}
\usepackage{tabularx}
\usepackage{mdframed}
\usepackage[a4paper,top=3cm,bottom=2cm,left=3cm,right=3cm,marginparwidth=1.75cm]{geometry}
\usepackage[colorinlistoftodos]{todonotes}
\usepackage[colorlinks=true, allcolors=red]{hyperref}
\theoremstyle{plain}
\newtheorem{theorem}{Theorem}
\newtheorem{lemma}{Lemma}

\newtheorem{proposition}{Proposition}

\newtheorem{corollary}{Corollary}

\theoremstyle{definition}

\newtheorem{definition}{Definition}

\newtheorem{remark}{Remark}

\providecommand{\keywords}[1]

\title{Li--Yorke chaotic weighted composition operators on Hardy and weighted Bergman spaces over the unit disk}
\author{Carlos F. \'{A}lvarez}

\address{Departamento de Matem\'{a}ticas, Universidad del Atl\'{a}ntico, 
Puerto Colombia, Colombia}

\email[{(Carlos F. \'Alvarez)}]{cfalvarez@mail.uniatlantico.edu.co}
\email[{(Juan Manzur)}]{jcmanzur@mail.uniatlantico.edu.co}

\author{Jo\~{a}o R. Carmo}

\author{Juan Manzur} 

\address{Instituto Federal de Educação, Ciência e Tecnologia da Bahia, Seabra-Bahía, Brasil }
\email{joaoribeiro@ifba.edu.br}

\subjclass{Primary 47A16, 47B33; Secondary 30H10, 30H20}\makeatletter

\date{\today}

\keywords{Li-Yorke chaos, power-boundedness, absolutely Ces\`aro  boundedness, mean Li-Yorke chaos, weighted composition operators}
\begin{document}

\begin{abstract}
We study Li--Yorke and mean Li--Yorke chaos for weighted composition operators $C_{w,\varphi}$ on Banach spaces of analytic functions on the unit disk $\mathbb{D}$. Under natural conditions on the space, we show that $C_{w,\varphi}$ is (densely) Li--Yorke chaotic if and only if it is not power-bounded, and (densely) mean Li--Yorke chaotic if and only if it is not absolutely Ces\`aro bounded. These results are applied to Hardy spaces $H^p(\mathbb{D})$, $1 \le p \le \infty$, and weighted Bergman spaces $A^p_\beta(\mathbb{D})$, $-1 < \beta < \infty$ and $1 < p < \infty$.
\end{abstract}

\maketitle
\markright{LI--YORKE CHAOTIC WEIGHTED COMPOSITION OPERATORS}


\section{Introduction}

The study of chaos for infinite-dimensional dynamical systems has grown into an active research area at the intersection of functional analysis, operator theory and topological dynamics; see \cite{BayartMatheron, GrossePeris} for a thorough account of the subject.

The notion of chaos motivating this work was introduced by Li and Yorke \cite{LiYorke1975}, who showed that a period-three orbit forces the coexistence of orbits of every period together with an uncountable set of trajectories that neither converge together nor diverge
to infinity. This phenomenon, known as \emph{Li--Yorke chaos}, captures the coexistence of pairs of orbits that are recurrently close yet persistently apart; it has since been extended to linear operators on infinite-dimensional Banach and Fr\'echet spaces
\cite{Bermudez2011, Bernardes2015}. Beyond this notion, several related forms of chaotic behavior have been studied, including distributional chaos, generic chaos, and mean Li--Yorke chaos \cite{Bermudez2011, BernardesBonillaMullerPeris2013, Bernardes2015,
BernardesPerisSanchez, BernardesBonillaPinto, BernardesVasconcellos2025}.

Weighted composition operators constitute a central and well-studied class in operator theory. Given an analytic self-map $\varphi \colon \mathbb{D} \to \mathbb{D}$ and an
analytic weight function $w \colon \mathbb{D} \to \mathbb{C}$, the weighted composition operator $C_{w,\varphi}$ acts on a space of analytic functions by
\[
    C_{w,\varphi} f(z) = w(z)\cdot f(\varphi(z)).
\]
When $w \equiv 1$, one recovers the classical composition operator $C_\varphi$, which has been extensively studied on spaces of analytic functions; see \cite{CowenMacCluer1995, Shapiro1993}. The present article is devoted to a systematic study of Li--Yorke chaos and mean Li--Yorke chaos for weighted composition operators acting on two families of classical function spaces over the unit disk $\mathbb{D}$: the
Hardy spaces $H^p(\mathbb{D})$ for $1 \le p \le \infty$, and the weighted Bergman spaces $A^p_\beta(\mathbb{D})$ for $-1 < \beta < \infty$, $1<p<\infty$. These spaces provide the natural setting for operator theory on the disk; we refer to \cite{Duren1970, Rosemblum1994} for $H^p$ and to \cite{Gori2006, HKZ2000} for weighted
Bergman spaces.

The recent works \cite{BernardesBonillaPinto, BernardesVasconcellos2025} established complete characterizations of Li--Yorke chaos, mean Li--Yorke chaos, and related notions for weighted composition operators on $L^p(\mu)$ and $C_0(\Omega)$ spaces, with
applications to weighted shifts on $c_0(\mathbb{N})$, $c_0(\mathbb{Z})$, $\ell^p(\mathbb{N})$, $\ell^p(\mathbb{Z})$, and to weighted translation operators on $C_0[1,\infty)$, $C_0(\mathbb{R})$, $L^p[1,\infty)$, and $L^p(\mathbb{R})$. The
techniques developed there rely essentially on the lattice structure of $L^p(\mu)$, the locally compact topology underlying $C_0(\Omega)$, and the combinatorial structure of weighted shifts and translations, none of which are available in spaces of analytic functions.

The present work addresses this complementary and structurally distinct setting. The key observation is that in spaces of analytic functions such as $H^p(\mathbb{D})$ and $A^p_\beta(\mathbb{D})$, the subspace $H^\infty$ plays a dual role that has no analogue
in the settings of \cite{BernardesBonillaPinto, BernardesVasconcellos2025}: it is simultaneously dense in the ambient space and provides sharp norm estimates for the iterates of $C_{w,\varphi}$ in terms of the weight iterates $w^{(n)}$. This structure is captured by two conditions (\ref{C1}) and (\ref{C2}) introduced in Section~\ref{Sec3}, which serve as the foundation of our approach. Under these conditions, we prove that
$C_{w,\varphi}$ is (densely) Li--Yorke chaotic if and only if it is not power-bounded, and (densely) mean Li--Yorke chaotic if and only if it is not absolutely Ces\`aro bounded. These abstract results are then applied to obtain sufficient conditions for Li--Yorke chaos
and mean Li--Yorke chaos for weighted composition operators on $H^p(\mathbb{D})$, $1 \leq p < \infty$, and on $A^p_\beta(\mathbb{D})$, $-1 < \beta < \infty$,
$1<p<\infty$, expressed entirely in terms of the symbols $w$ and $\varphi$. The case $p = \infty$ is treated separately in Section~\ref{Sec4}, where the lack of separability of $H^\infty$ requires an adapted argument. To illustrate the sharpness of our conditions, Section~\ref{Sec5} presents examples of weighted composition operators that are Li--Yorke chaotic on $A^2_\beta(\mathbb{D})$ and $H^p(\mathbb{D})$ for $1\leq p \leq 2$. One of these examples additionally shows that the hypotheses of Theorem~\ref{sufLY} are not
necessary for Li--Yorke chaos to occur.

\section{Li--Yorke Chaotic, Power Bounded, and Absolutely Cesàro Bounded Operators}\label{Sec2}

Let \( X \) be a Banach space and let $\mathcal{L}(X)$ be the set of all bounded linear operators on \( X \). In this section, we introduce the essential results about the different kinds of Li--Yorke chaos, as well as the notions of power-bounded and absolutely Cesàro bounded operators. Before turning to the main results of this section, we briefly recall the definition of residual set and we introduce an important result for operators defined on separable Banach spaces. 

\begin{definition}
Let \( A \) be a subset of $X$.
\begin{enumerate}
   \item[(i)]  \( A \) is said to be \textit{nowhere dense} if the interior of its closure is empty. 
    
 \item[(ii)]  \( A \) is said to be of \textit{first category} (or \textit{meager}) in \( X \) if it can be expressed as a countable union of nowhere dense sets in \( X \).
    
    
\item[(iii)] \( A \) is called \textit{residual} in \( X \) if its complement \( X \setminus A \) is of first category; that is, a residual set is the complement of a meager set.
\end{enumerate}
$X$ is a Baire space if every first category subset of \( X \) has empty interior.
\end{definition}

\begin{remark}
By definition, the intersection of residual sets is again residual. Moreover, in a Baire space, every residual set is dense.
\end{remark}

\begin{proposition}[See {\cite[Corollary~4]{Bernardes2015}}]\label{resconv}
Let \( T \in \mathcal{L}(X) \), where \( X \) is an infinte-dimensional separable Banach space. If the set of all points \( x \in X \) such that the sequence \( (T^n x) \) has a subsequence converging to zero is dense in \( X \), then it is residual in \( X \).
\end{proposition}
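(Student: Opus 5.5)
The plan is to write the set in question as a countable intersection of open sets and then use the Baire category theorem. Let
\[
A=\{x\in X:\ (T^n x) \text{ has a subsequence converging to } 0\}.
\]
Having a subsequence that tends to $0$ is the same as $\liminf_{n\to\infty}\|T^n x\|=0$. That in turn means: for every $k\in\mathbb{N}$ and every $N\in\mathbb{N}$ there is some $n\ge N$ with $\|T^n x\|<1/k$.

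From this I get the representation
\[
A=\bigcap_{k\ge 1}\bigcap_{N\ge 1}U_{k,N},\qquad U_{k,N}=\bigcup_{n\ge N}\{x\in X:\ \|T^nx\|<1/k\}.
\]
First I would check both inclusions of this identity. If $x\in A$, then for fixed $k,N$ the subsequence eventually has norm below $1/k$ at some index $n\ge N$, so $x\in U_{k,N}$. Conversely, if $x$ lies in every $U_{k,N}$, I pick the indices inductively: take $n_k\ge n_{k-1}+1$ with $\|T^{n_k}x\|<1/k$. This gives a strictly increasing sequence with $T^{n_k}x\to0$.

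Next, each set $\{x:\|T^nx\|<1/k\}$ is open, because $x\mapsto\|T^nx\|$ is continuous ($T^n$ is bounded). Hence every $U_{k,N}$ is open, being a union of open sets. Each $U_{k,N}$ also contains $A$, and $A$ is dense by hypothesis, so each $U_{k,N}$ is a dense open set.

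Finally, the complement of a dense open set is closed with empty interior, i.e.\ nowhere dense. So $X\setminus A=\bigcup_{k,N}(X\setminus U_{k,N})$ is a countable union of nowhere dense sets, which means $A$ is residual. Since $X$ is a Banach space it is a Baire space, so $A$ is in fact a dense $G_\delta$. Separability and infinite dimension are not actually used in this argument; they matter only for the context in \cite{Bernardes2015}.

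There is no real obstacle here. The only point that needs care is the inductive choice of indices in the converse inclusion, so that the extracted sequence is a genuine subsequence.
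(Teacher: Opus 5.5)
Your argument is correct and complete. Writing the set as the $G_\delta$ set $\bigcap_{k,N}\bigcup_{n\ge N}\{x:\|T^nx\|<1/k\}$, whose open pieces each contain the dense set itself, is the standard proof behind the cited result; the paper gives no proof of its own and simply quotes \cite[Corollary~4]{Bernardes2015}. You are also right that separability and infinite dimensionality play no role in the argument.
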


\subsection{Notions of Li--Yorke chaos}
An operator $T\in\mathcal{L}(X)$ is \emph{Li--Yorke chaotic} if it admits an uncountable \textit{scrambled set} $S$, meaning that for all $x, y \in S$ with $x\neq y$:
$$\liminf_{n\to \infty}\|T^{n}x-T^{n}y\|=0 \quad \text{and} \quad \limsup_{n\to \infty}\|T^{n}x-T^{n}y\|>0.$$
If the set \( S \) can be chosen to be dense in \( X \), then \( T \) is called \emph{densely Li--Yorke chaotic}.

Recall that a vector \( x \in X \) is said to be \emph{irregular} for \( T \) if
\[\liminf_{n \to \infty} \|T^n x\| = 0 \quad \text{and} \quad \limsup_{n \to \infty} \|T^n x\| = \infty.\]
By \cite[Theorem~9]{Bernardes2015}, an operator is Li--Yorke chaotic if and only if it admits an irregular vector.

\begin{definition}
An operator $T\in \mathcal{L}(X)$ satisfies the \textit{Li-Yorke Chaos Criterion} (LYCC) if there exist an increasing sequence of integers \( (n_k)_k \) and a subset \( X_0 \subset X \) such that:
\begin{itemize}
    \item[(a)] \( \displaystyle\lim_{k \to \infty} T^{n_k} x = 0 \) for all \( x \in X_0 \),
    \item[(b)] \( \sup_n \| T^n|_Y \| = \infty \), where \( Y := \operatorname{span}(X_0) \) and \( T^n|_Y \) denotes the restriction of \( T^n \) to \( Y \).
\end{itemize}
\end{definition}

One might expect the LYCC to be merely a sufficient condition for Li--Yorke chaos; however, it turns out that the two notions are actually equivalent.

\begin{theorem}[See {\cite[Theorem~8]{Bermudez2011}}]\label{LYCC}
Let \( T\in \mathcal{L}(X)\). The following assertions are equivalent:
\begin{itemize}
    \item [(i)]\( T \) is Li--Yorke chaotic.
    \item [(ii)] \( T \) satisfies the Li--Yorke Chaos Criterion.
\end{itemize}
\end{theorem}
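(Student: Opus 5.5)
We prove the two implications separately. The irregular-vector characterization of Li--Yorke chaos \cite[Theorem~9]{Bernardes2015} quoted above serves as the bridge in both directions.

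\emph{(i)$\Rightarrow$(ii).} Let $x$ be an irregular vector for $T$. Choose an increasing sequence $(n_k)$ with $T^{n_k}x\to 0$, which is possible since $\liminf_n\|T^nx\|=0$. Put $X_0=\{x\}$, so that $Y=\operatorname{span}\{x\}$. Condition (a) holds by the choice of $(n_k)$. For (b), note that $\|T^n|_Y\|=\|T^nx\|/\|x\|$ and $\limsup_n\|T^nx\|=\infty$, so $\sup_n\|T^n|_Y\|=\infty$. If one prefers to avoid the cited characterization, the same argument works directly from a scrambled pair: take $u=x-y\neq 0$ with $x,y\in S$. However, $\limsup_n\|T^nu\|>0$ need not be infinite, so the irregular vector is the natural choice.

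\emph{(ii)$\Rightarrow$(i).} Here the goal is to produce an irregular vector from the criterion. Let
\[
X_1=\{z\in \overline{Y}:\ T^{n_k}z\to 0\}.
\]
This is a linear subspace, and it contains $X_0$, hence contains $Y$.

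The key step is a Banach--Steinhaus argument inside a suitable closed subspace. Consider the closed subspace $Z=\overline{Y}$ with the family of operators $\{T^n|_Z\}$. By (b) and the density of $Y$ in $Z$, we have $\sup_n\|T^n|_Z\|=\infty$. By the uniform boundedness principle, the set $U=\{z\in Z:\sup_n\|T^nz\|=\infty\}$ is residual in $Z$.

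Next we need the set of $z\in Z$ with $\liminf_n\|T^nz\|=0$ to be residual as well. This is where I expect the main obstacle to lie. Convergence along the fixed sequence $(n_k)$ on the dense set $Y$ does not pass to a residual set, because $\sup_k\|T^{n_k}|_Z\|$ may be infinite.

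The plan is to pass to a subsequence and use a sets-of-$G_\delta$ argument. The set
\[
L=\{z\in Z:\liminf_n\|T^nz\|=0\}=\bigcap_{m}\bigcap_{N}\bigcup_{n\ge N}\{z:\|T^nz\|<1/m\}
\]
is a $G_\delta$ subset of $Z$. It contains the dense set $Y$, hence it is residual in $Z$. This is essentially the content of Proposition~\ref{resconv}, but without needing separability, since the $G_\delta$ description already gives it.

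Then $U\cap L$ is residual in the Banach space $Z$, hence nonempty by Baire. Any $z\in U\cap L$ satisfies $\liminf_n\|T^nz\|=0$ and $\limsup_n\|T^nz\|=\infty$, so $z$ is irregular. By \cite[Theorem~9]{Bernardes2015}, $T$ is Li--Yorke chaotic.

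The only points to verify carefully are that the open sets $\{z:\|T^nz\|<1/m\}$ indeed make $L$ a $G_\delta$, and that $U$ is residual. For the latter, write its complement as $\bigcup_M\{z:\sup_n\|T^nz\|\le M\}$. Each of these sets is closed, and each is nowhere dense, for otherwise uniform boundedness on a ball would give $\sup_n\|T^n|_Z\|<\infty$.
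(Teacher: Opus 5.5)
Your proof is correct. The paper gives no proof of this theorem; it simply quotes it from \cite{Bermudez2011}, so there is no in-paper argument to compare against. Your (ii)$\Rightarrow$(i) uses the same Banach--Steinhaus plus dense-$G_\delta$ mechanism that the paper uses in Proposition~\ref{GenLY} and Lemma~\ref{eqLYH2}. You are also right that the residuality of $\{z:\liminf_n\|T^nz\|=0\}$ in $Z$ needs no separability, and it does not matter that $T^n$ need not map $Z$ into itself.

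There are some loose ends. The subspace $X_1$ and the announced passage to a subsequence are never used. You should also state explicitly that $\sup_n\|T^nz\|=\infty$ gives $\limsup_n\|T^nz\|=\infty$, because each $\|T^nz\|$ is finite.

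Your aside on scrambled pairs contradicts itself, but it is easily repaired. The repair makes (i)$\Rightarrow$(ii) independent of the nontrivial half of \cite[Theorem~9]{Bernardes2015}:
\begin{itemize}
\item For a Li--Yorke pair, put $u=x-y$ and choose $n_k$ with $T^{n_k}u\to0$.
\item Take $X_0=\{T^ju:j\ge0\}$ rather than $\{u\}$. Then (a) holds, since $T^{n_k}T^ju=T^jT^{n_k}u\to0$.
\item Suppose $\sup_n\|T^n|_Y\|=C<\infty$. Then for $n\ge n_k$ we get $\|T^nu\|=\|T^{n-n_k}(T^{n_k}u)\|\le C\|T^{n_k}u\|$, which forces $T^nu\to0$ and contradicts $\limsup_n\|T^nu\|>0$.
\end{itemize}

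Similarly, in (ii)$\Rightarrow$(i) you only need the trivial half of that characterization. If $z$ is irregular, the line $\operatorname{span}\{z\}$ is itself an uncountable scrambled set, since $\|T^n(\lambda z-\mu z)\|=|\lambda-\mu|\,\|T^nz\|$.
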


\begin{definition}
An operator \( T\in \mathcal{L}(X) \) is said to be \emph{mean Li-Yorke chaotic} if there is an uncountable subset \( S \subset X \) (a \emph{mean Li-Yorke set} for \( T \)) such that every pair \( (x, y) \) of distinct points in \( S \) is a \emph{mean Li-Yorke pair} for \( T \), in the sense that
\[
\liminf_{N \to \infty} \frac{1}{N} \sum_{j=1}^N \| T^j x - T^j y \| = 0
\quad \text{and} \quad
\limsup_{N \to \infty} \frac{1}{N} \sum_{j=1}^N \| T^j x - T^j y \| > 0.
\]

If \( S \) can be chosen to be dense (respectively, residual) in \( X \), then we say that \( T \) is \emph{densely} (respectively, \emph{generically}) mean Li-Yorke chaotic.
\end{definition}

\begin{definition}
Given an operator \( T\in \mathcal{L}(X) \) and a vector \( x \in X \), we say that \( x \) is an \emph{absolutely mean irregular} (respectively, \emph{absolutely mean semi-irregular}) vector for \( T \) if
\[\liminf_{N \to \infty} \frac{1}{N} \sum_{j=1}^N \| T^j x \| = 0
\quad \text{and} \quad \limsup_{N \to \infty} \frac{1}{N} \sum_{j=1}^N \| T^j x \| = \infty\ \  \mbox{(resp. $>0$)}.\]
\end{definition}

\subsection{Power-bounded and absolutely  Ces\`aro bounded operators }
We now introduce the two boundedness conditions central to this paper. As we shall see, power-boundedness and absolute Ces\`aro boundedness are not merely technical hypotheses: the failure of the former is equivalent to Li--Yorke chaos, and the failure of the latter
is equivalent to mean Li--Yorke chaos.

\begin{definition}
An operator  \( T \in \mathcal{L}(X) \) is said to be \textit{power-bounded} if there exists \( C \in (0, \infty) \) such that \( \|T^n\| \leq C \) for all \( n \in \mathbb{N} \). More generally, given a subspace \( Z \) of \( X \), we say that \( T \) is \textit{power-bounded in \( Z \)} if there exists \( C \in (0, \infty) \) such that
\[
\|T^n|_Z\| \leq C \quad \text{for all } n \in \mathbb{N}.
\]
\end{definition}

\begin{definition}
An operator \( T \) is said to be \emph{absolutely Ces\`aro bounded} if there exists a constant \( C > 0 \) such that
\[
\sup_{N \in \mathbb{N}} \frac{1}{N} \sum_{j=1}^N \| T^j x \| \leq C \| x \| \quad \text{for all } x \in X.
\]
\end{definition}

\begin{theorem}[See {\cite[Theorem~4]{BernardesPerisSanchez}}]\label{CBthm}
For every \( T \in \mathcal{L}(X) \), the following assertions are equivalent:
\begin{itemize}
    \item[(i)] \( T \) is not absolutely Ces\`aro bounded;
    \item[(ii)] There is a vector \( x \in X \) such that
    $\displaystyle\sup_{N \in \mathbb{N}} \frac{1}{N} \sum_{j=1}^N \| T^j x \| = \infty$;
    \item[(iii)] The set of all vectors \( y \in X \) such that $\displaystyle\sup_{N \in \mathbb{N}} \frac{1}{N} \sum_{j=1}^N \| T^j y \| = \infty$
    is residual in \( X \).
\end{itemize}
\end{theorem}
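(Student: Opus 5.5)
The statement to prove is Theorem~\ref{CBthm}: (i) $T$ is not absolutely Ces\`aro bounded, (ii) some vector has unbounded Ces\`aro means of norms, and (iii) such vectors form a residual set. The plan is to close the cycle (iii)$\Rightarrow$(ii)$\Rightarrow$(i)$\Rightarrow$(iii). Throughout, write
\[
\Phi_N(x) := \frac{1}{N}\sum_{j=1}^N \|T^j x\|.
\]
Each $\Phi_N$ is a continuous seminorm on $X$: it is finite, subadditive, absolutely homogeneous, and continuous because $T$ is bounded.

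The implication (iii)$\Rightarrow$(ii) is immediate. $X$ is a Banach space, hence a Baire space, so a residual set is dense and in particular nonempty.

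For (ii)$\Rightarrow$(i), argue by contraposition. If $\Phi_N(x)\le C\|x\|$ for all $N$ and $x$, then $\sup_N \Phi_N(x)<\infty$ for every $x$, which contradicts (ii).

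The substantive step is (i)$\Rightarrow$(iii). Set $\Phi(x):=\sup_N \Phi_N(x)\in[0,\infty]$. As a supremum of continuous functions, $\Phi$ is lower semicontinuous, and it is sublinear where finite.

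1. For each $m\in\mathbb{N}$, consider $F_m := \{x : \Phi(x)\le m\} = \bigcap_N \{x : \Phi_N(x)\le m\}$. This is closed, convex and balanced.
2. The set of vectors with $\Phi(x)<\infty$ is exactly $\bigcup_m F_m$. Its complement is the set in (iii), so it suffices to show that every $F_m$ has empty interior. Then $\bigcup_m F_m$ is meager and the complement is residual.
3. Suppose some $F_m$ contains a ball $B(x_0,r)$. Since $F_m$ is balanced and convex, $-x_0\in F_m$, and $\tfrac12(x_0+u)+\tfrac12(-x_0)=u/2$ lies in $F_m$ for every $u$ with $\|u\|<r$. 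So $F_m\supseteq B(0,r/2)$.
4. By homogeneity of each $\Phi_N$, this gives $\Phi_N(x)\le (2m/r)\|x\|$ for all $x$ and all $N$. That is absolute Ces\`aro boundedness, contradicting (i).

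This is the uniform boundedness principle applied to the family of seminorms $(\Phi_N)_N$. Equivalently, one can apply Banach--Steinhaus to the operators $x\mapsto (T^jx)_{j\le N}$ from $X$ into $\ell^1$-sums of $X$ equipped with normalized norms.

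The only delicate point is step 3: checking that $F_m$ is genuinely convex and balanced. This relies on subadditivity of the norm under finite sums and on taking suprema, both of which are routine. No separability of $X$ is needed anywhere.
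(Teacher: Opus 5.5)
Your proof is correct. The cycle (iii)$\Rightarrow$(ii)$\Rightarrow$(i)$\Rightarrow$(iii) closes, and the key step (i)$\Rightarrow$(iii) is exactly the uniform boundedness principle for the continuous seminorms $\Phi_N$ (equivalently, Banach--Steinhaus for the maps $x\mapsto \frac1N(Tx,\dots,T^Nx,0,\dots)$ into $\ell^1(\mathbb{N};X)$). Your convex--balanced argument correctly turns an interior point of some $F_m$ into the bound $\Phi_N(x)\le (2m/r)\|x\|$.

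The paper gives no proof of its own and simply imports the result from \cite{BernardesPerisSanchez}. Your argument is the standard Banach--Steinhaus route to it, the same mechanism the paper uses for power-boundedness in the proof of Proposition~\ref{GenLY}.
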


\begin{theorem}[See {\cite[Theorem~22]{BernardesPerisSanchez}}]\label{MLYthm}
If \( T \in \mathcal{L}(X) \) and 
\begin{align}\label{setACB}
Y := \left\{ x \in X : \liminf_{N \to \infty} \frac{1}{N} \sum_{j=1}^N \|T^j x\| = 0 \right\}    
\end{align}
is dense in \( X \), then the following assertions are equivalent:
\begin{itemize}
    \item [i)] \( T \) is mean Li--Yorke chaotic;
    \item [ii)] \( T \) has a residual set of absolutely mean irregular vectors;
    \item [iii)] \( T \) is not absolutely Ces\`aro
 bounded;
   \item [iv)] $\displaystyle \limsup_{N \to \infty} \frac{1}{N} \sum_{j=1}^{N} \|T^j y_0\| > 0 \quad \text{for some } y_0 \in X.$
\end{itemize}
\end{theorem}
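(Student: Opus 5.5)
The plan is to prove the cycle iii) $\Rightarrow$ ii) $\Rightarrow$ i) $\Rightarrow$ iv) $\Rightarrow$ iii). The density of $Y$ is used at two points: in the Baire-category step iii) $\Rightarrow$ ii), and in the contrapositive argument for iv) $\Rightarrow$ iii).

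\emph{iii) $\Rightarrow$ ii).} First I write $Y$ as a $G_\delta$ set:
\[
Y=\bigcap_{k\ge1}\bigcap_{M\ge1}\bigcup_{N\ge M}\Big\{x\in X:\ \tfrac1N\textstyle\sum_{j=1}^N\|T^jx\|<\tfrac1k\Big\}.
\]
Each set inside the union is open, since $x\mapsto \frac1N\sum_{j\le N}\|T^jx\|$ is continuous. By hypothesis $Y$ is dense, so $Y$ is a dense $G_\delta$ and hence residual, because $X$ is a Banach space and so a Baire space.

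By Theorem~\ref{CBthm}, the set $R$ of vectors $y$ with $\sup_N\frac1N\sum_{j\le N}\|T^jy\|=\infty$ is residual. For each fixed $N$ the Ces\`aro mean is finite, so the supremum being infinite means the $\limsup$ is infinite. Hence every vector of the residual set $Y\cap R$ is absolutely mean irregular.

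\emph{ii) $\Rightarrow$ i).} Take one absolutely mean irregular vector $x$, which is nonzero, and set $S=\{\lambda x:\lambda\in[0,1]\}$. This set is uncountable. For $\lambda\ne\mu$ the difference of the two points is $(\lambda-\mu)x$, and its Ces\`aro means are $|\lambda-\mu|$ times those of $x$. So they have $\liminf$ equal to $0$ and $\limsup$ equal to $\infty>0$, and $S$ is a mean Li--Yorke set.

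\emph{i) $\Rightarrow$ iv).} Take any mean Li--Yorke pair $(x,y)$ and set $y_0=x-y$.

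\emph{iv) $\Rightarrow$ iii).} This is the main step, and I argue by contrapositive. Assume $T$ is absolutely Ces\`aro bounded with constant $C$.

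First I show that every $z\in Y$ has Ces\`aro means converging to $0$, not merely along a subsequence. Given $\varepsilon>0$, choose $N$ with $\frac1N\sum_{i=1}^N\|T^iz\|<\varepsilon$. Then some $i\le N$ satisfies $\|T^iz\|<\varepsilon$. Applying the Ces\`aro bound to $T^iz$ gives, for all $M$,
\[
\frac1M\sum_{j=1}^M\|T^{i+j}z\|\le C\varepsilon .
\]
A shift by the fixed index $i$ does not change the asymptotic Ces\`aro behaviour, so $\limsup_M\frac1M\sum_{j\le M}\|T^jz\|\le C\varepsilon$. Since $\varepsilon$ was arbitrary, the limit is $0$.

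Next I extend this from $Y$ to all of $X$ by approximation, again using the uniform bound. For $y_0\in X$ and $z\in Y$ with $\|y_0-z\|<\varepsilon$,
\[
\frac1N\sum_{j\le N}\|T^jy_0\|\le\frac1N\sum_{j\le N}\|T^jz\|+C\varepsilon .
\]
Hence $\limsup_N\frac1N\sum_{j\le N}\|T^jy_0\|\le C\varepsilon$ for every $\varepsilon>0$, so it equals $0$. This holds for every $y_0\in X$, which contradicts iv).

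I expect the upgrade from $\liminf$ to a genuine limit on $Y$ to be the delicate point. The density approximation alone controls only the $\liminf$. The trick of locating a single small iterate $T^iz$ and then applying the bound $C$ is what converts this into control of the $\limsup$.
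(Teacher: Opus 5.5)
Your argument is correct. The route differs from the paper only in that the paper gives no proof of this statement at all: it imports it from \cite[Theorem~22]{BernardesPerisSanchez}. You instead derive it directly from Theorem~\ref{CBthm} via the cycle iii)$\Rightarrow$ii)$\Rightarrow$i)$\Rightarrow$iv)$\Rightarrow$iii), and each step checks out:
\begin{itemize}
\item Your description of $Y$ as a countable intersection of open sets is right. A dense $G_\delta$ is residual in any space; Baire's theorem is really needed only to know that the residual set in ii) is nonempty, so that you can pick $x$.
\item Since each Ces\`aro mean is finite, $\sup_N=\infty$ forces $\limsup=\infty$.
\item The segment $\{\lambda x:\lambda\in[0,1]\}$ is a mean Li--Yorke set by homogeneity.
\item In iv)$\Rightarrow$iii), the estimate $\frac1M\sum_{j\le M}\|T^jz\|\le\frac1M\sum_{j\le i}\|T^jz\|+C\varepsilon$ is what makes the shift by $i$ harmless.
\end{itemize}
Your key step can be restated as follows. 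Under absolute Ces\`aro boundedness, $\{x:\lim_N\frac1N\sum_{j\le N}\|T^jx\|=0\}$ is a closed subspace that coincides with $Y$, so density of $Y$ forces it to be all of $X$.

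What the citation gives the paper, and your route does not, is the variant stated right after the theorem and used in Theorem~\ref{sufMLY}. There i) is replaced by \emph{densely} mean Li--Yorke chaotic when $X$ is separable. Your segment is never dense, so that variant needs an extra step. For example, you could apply Mycielski's theorem to the relation $\{(x,y)\in X\times X: x-y \text{ absolutely mean irregular}\}$. This relation is residual because $(x,y)\mapsto x-y$ is a continuous open surjection. For the statement exactly as formulated, nothing is missing.
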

In the case where \( X \) is separable, condition \( i) \) may be replaced with the assumption that the operator is densely mean Li--Yorke chaotic.

\subsection{Li--Yorke chaotic operators are not power-bounded }

From now on \( X \) will denote an infinite-dimensional separable Banach space. Let \( T \in \mathcal{L}(X) \) and define
\begin{align*}\label{setliminf}
\mathrm{NS}(T):=\left\{ x \in X : (T^n x)_{n \in \mathbb{N}} \text{ has a subsequence converging to zero} \right\}    
\end{align*}
 and \( \mathcal{X} \) the closure of the linear span of \( \mathrm{NS}(T) \).  It is clear that \( \mathcal{X} \) is non-empty and invariant under \( T \); that is, \( T|\mathcal{X} \in \mathcal{L}(\mathcal{X}) \).

\begin{proposition}\label{GenLY}
Let \( T \in \mathcal{L}(X) \) be a bounded operator. Then \( T \) is Li-Yorke chaotic if and only if it is not power bounded in \(\mathcal{X}\). 
\end{proposition}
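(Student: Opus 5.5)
We prove the two implications separately; both run through Theorem~\ref{LYCC} (Li--Yorke chaos $\Leftrightarrow$ LYCC) and the characterization of Li--Yorke chaos by irregular vectors.

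\emph{Li--Yorke chaotic $\Rightarrow$ not power bounded in $\mathcal{X}$.}
By \cite[Theorem~9]{Bernardes2015}, $T$ has an irregular vector $x$. Then $\liminf_n\|T^nx\|=0$, so some subsequence of $(T^nx)$ converges to zero and $x\in \mathrm{NS}(T)\subset\mathcal{X}$. Since also $\limsup_n\|T^nx\|=\infty$, we get $\sup_n\|T^n|_{\mathcal{X}}\|\ge \sup_n\|T^nx\|/\|x\|=\infty$. Hence $T$ is not power bounded in $\mathcal{X}$.

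\emph{Not power bounded in $\mathcal{X}$ $\Rightarrow$ Li--Yorke chaotic.}
We first move from $\mathcal{X}$ down to $\operatorname{span}\mathrm{NS}(T)$. This span is dense in $\mathcal{X}$, and $\|T^n|_Z\|=\|T^n|_{\overline Z}\|$ for every subspace $Z$ because each $T^n$ is bounded. Therefore $\sup_n\|T^n|_{\operatorname{span}\mathrm{NS}(T)}\|=\infty$.

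The obstacle is that LYCC asks for a \emph{single} sequence $(n_k)$ along which every vector of $X_0$ tends to zero, whereas each element of $\mathrm{NS}(T)$ comes with its own subsequence. The plan is to avoid building $X_0$ by hand and instead work inside the space $\mathcal{X}$.

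1. Apply the argument to $S:=T|_{\mathcal{X}}\in\mathcal{L}(\mathcal{X})$, noting that $\mathcal{X}$ is a separable Banach space. We have $\mathrm{NS}(T)\subset \mathrm{NS}(S)$.
2. Observe that $\mathrm{NS}(T)$ is dense in $\mathcal{X}$. This holds because it is invariant under scalar multiplication, and the closure of its linear span is $\mathcal{X}$. Density is not literally automatic from this, so we use Proposition~\ref{resconv} in the following form: the set $\mathrm{NS}(S)$ is a $G_\delta$ in $\mathcal{X}$, being $\bigcap_m\bigcap_N\bigcup_{n\ge N}\{x:\|S^nx\|<1/m\}$. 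If it is dense in the closed subspace $\mathcal{X}$, it is residual there.
3. The set $B=\{x\in\mathcal{X}:\sup_n\|S^nx\|=\infty\}$ is residual in $\mathcal{X}$ by the Banach--Steinhaus theorem, since $\sup_n\|S^n\|=\infty$.
4. The intersection $\mathrm{NS}(S)\cap B$ is residual, hence nonempty by Baire. Any $x$ in it is an irregular vector for $T$, so $T$ is Li--Yorke chaotic by \cite[Theorem~9]{Bernardes2015}.

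In the finite-dimensional case, or when $\mathcal{X}$ is trivial, Banach--Steinhaus and Proposition~\ref{resconv} still apply with minor care. Alternatively, one can invoke Theorem~\ref{LYCC} directly.

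The main obstacle is step~2, the density of $\mathrm{NS}(T)$ in $\mathcal{X}$. The set $\mathrm{NS}(T)$ need not be closed under addition, so the span being dense does not immediately give density. First try the following: each $x\in\mathrm{NS}(T)$ and each $y=\sum a_ix_i$ in the span can be approximated using the $G_\delta$ structure. If that fails, fall back on proving LYCC directly. Take $X_0=\mathrm{NS}(T)$, and obtain a common sequence $(n_k)$ by a diagonal argument over a countable dense subset of the span, which exists by separability. Then verify (b) via the density step above, as is done in \cite{Bermudez2011}.
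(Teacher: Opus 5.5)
Your forward implication is fine: an irregular vector lies in $\mathrm{NS}(T)\subset\mathcal{X}$ and has unbounded orbit. This is slightly more direct than the paper, which goes through the LYCC.

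\textbf{The gap.} The converse breaks at step~2, and it cannot be repaired as you propose, because $\mathrm{NS}(T)$ need not be dense in $\mathcal{X}$. Choose weights $a_j\in\{1/2,2\}$ such that $W_n=a_1\cdots a_n$ satisfies $\liminf_n|W_n|=0$ and $\limsup_n|W_n|=\infty$. On $\ell^2(\mathbb{N}_0)$ let $F_a e_k=a_{k+1}e_{k+1}$ and $F_{1/a}e_k=a_{k+1}^{-1}e_{k+1}$, and put $T=F_a\oplus F_{1/a}$ on $\ell^2\oplus\ell^2$. Every $(e_k,0)$ and $(0,e_k)$ lies in $\mathrm{NS}(T)$, so $\mathcal{X}=\ell^2\oplus\ell^2$. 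But if $x_k\neq0$ and $y_l\neq0$, then
\[
\|T^n(x,y)\|\ \ge\ \max\Bigl(\frac{|x_k|}{|W_k|}\,|W_{n+k}|,\ \frac{|y_l|\,|W_l|}{|W_{n+l}|}\Bigr).
\]
Since $|W_{n+l}|\le 2^{|l-k|}|W_{n+k}|$, the right-hand side is bounded below by a positive constant independent of $n$. Hence $\mathrm{NS}(T)\subset(\ell^2\oplus\{0\})\cup(\{0\}\oplus\ell^2)$, which is a closed nowhere dense set.

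The $G_\delta$ structure only upgrades density to residuality; it cannot produce density. Your LYCC fallback also fails on this example. Since $\max(|W_n|,|W_n|^{-1})\ge 1$, the orbits of $(e_0,0)$ and $(0,e_0)$ have no common subsequence along which both tend to $0$. So no diagonal argument can give the single sequence $(n_k)$ needed with $X_0=\mathrm{NS}(T)$.

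\textbf{The fix.} This $T$ is Li--Yorke chaotic, since $(e_0,0)$ is irregular. Density of $\mathrm{NS}(T)$ in $\mathcal{X}$ is exactly what can fail in the chaotic case, and it is what non-chaos gives you for free. So the missing idea is to argue by contraposition, as the paper does.
\begin{enumerate}
\item[(1)] Assume $T$ is not Li--Yorke chaotic. Then it has no semi-irregular vectors, because a vector with $\liminf_n\|T^nx\|=0<\limsup_n\|T^nx\|$ already yields Li--Yorke chaos.
\item[(2)] Hence every $x\in\mathrm{NS}(T)$ satisfies $T^nx\to0$. So $\mathrm{NS}(T)$ is a linear subspace, and therefore dense in $\mathcal{X}$.
\item[(3)] Now your steps 2--4 apply: a dense $G_\delta$ is residual, then Banach--Steinhaus and Baire. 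If $T$ is not power bounded in $\mathcal{X}$, they produce an irregular vector, contradicting (1).
\end{enumerate}
With this reorganization your argument is complete. Your direct $G_\delta$ observation also makes the paper's separate treatment of finite-dimensional $\mathcal{X}$ unnecessary.
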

\begin{proof}
Suppose that \(T \) is Li-Yorke chaotic. By the LYCC, there exists an increasing sequence of integers \( (n_k)_k \), \( X_0 \subset X \) such that:
$\displaystyle\lim_{k \to \infty} T^{n_k} x = 0,$ for all $x \in X_0,$ and \( T^n|_Y \) not power bounded, where \( Y := \operatorname{span}(X_0) \). Since $Y\subset\mathcal{X}$, it follows that $T^n |_\mathcal{X}$ is not power bounded as well.

Conversely, suppose that \( T \) is not Li-Yorke chaotic. Let $x\in X$ whose orbit under \( T \) has a subsequence converging to zero; this implies that $\displaystyle\lim_{n \to \infty} \inf \| T^n x \| = 0$. Since $T$ has no irregular vectors, it follows that $\displaystyle\limsup_{n\to\infty} \|T^n x\| = 0$ as well, and therefore $\displaystyle\lim_{n\to\infty} T^n x = 0$.  
In this setting, the set \( \mathrm{NS}(T) \) defines a subspace. If \( \mathcal{X} \) is finite-dimensional, \( \mathcal{X}= \mathrm{NS}(T)\), thus \( T \) is power-bounded  in \( \mathcal{X} \). Otherwise, \( \mathrm{NS}(T) \) is dense and by Proposition~\ref{resconv}, we conclude that  \( \mathrm{NS}(T) \) is residual in \( \mathcal{X}\). Now define \( \mathcal{R} \) as the set of all vectors \( x \in \mathcal{X} \) whose orbit under \( T \) is unbounded. If \( T \) was not power-bounded  in \( \mathcal{X} \), by the 
Banach--Steinhaus \cite[Theorem~2.5]{RudinFunctional}, \( \mathcal{X} \setminus \mathcal{R} \) is a proper subset of first category on $X$, hence \( \mathcal{R} \) is residual in \( \mathcal{X} \). Since the intersection  \( \mathrm{NS}(T) \cap \mathcal{R} \) is residual in \( \mathcal{X} \) (and thus dense in \( \mathcal{X} \)), any element \( y \in  \mathrm{NS}(T) \cap \mathcal{R} \) would be an irregular vector. Since we are assuming that $T$ is not
Li-Yorke chaotic, we conclude that is power bounded \( \mathcal{X} \).
\end{proof}

\begin{lemma}\label{eqLYH2}
Let \( T \in \mathcal{L}(X) \) be an operator. If the set \( \mathrm{NS}(T) \)
is dense in $X$, then the following assertions are equivalent:
\begin{itemize}
    \item[(i)] \( T \) is densely Li-Yorke chaotic;
    \item[(ii)] \( T \) has a residual set of irregular vectors;
    \item[(iii)] \( T \) is not power-bounded.
\end{itemize}
\end{lemma}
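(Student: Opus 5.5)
We will prove the cycle of implications (ii) $\Rightarrow$ (i) $\Rightarrow$ (iii) $\Rightarrow$ (ii), with the density of $\mathrm{NS}(T)$ as the standing hypothesis. Throughout, $X$ is separable and infinite-dimensional, as fixed before Proposition~\ref{GenLY}.

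For (iii) $\Rightarrow$ (ii), we argue exactly as in the converse part of Proposition~\ref{GenLY}, but now on all of $X$. Since $\mathrm{NS}(T)$ is dense in $X$, Proposition~\ref{resconv} shows that $\mathrm{NS}(T)$ is residual in $X$. If $T$ is not power-bounded, then $\sup_n\|T^n\|=\infty$. By the Banach--Steinhaus theorem, the set of $x$ with $\sup_n\|T^n x\|<\infty$ is then of first category, so the set $\mathcal{R}$ of vectors with unbounded orbit is residual. The intersection $\mathrm{NS}(T)\cap\mathcal{R}$ is therefore residual. Each of its elements $x$ satisfies $\liminf_n\|T^nx\|=0$ and $\limsup_n\|T^nx\|=\infty$, so it is an irregular vector.

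For (ii) $\Rightarrow$ (i), we need to produce a dense uncountable scrambled set, and this is the step requiring most care. First try: let $I$ be the residual set of irregular vectors. If $x$ is irregular, then $\lambda x$ is irregular for every $\lambda\neq0$. The natural candidate is to pick a dense sequence $(x_k)\subset I$ and set $S=\bigcup_k\{\lambda x_k:\lambda\in(0,1]\}$. However, pairs $\lambda x_k,\mu x_j$ with $j\neq k$ need not form a Li--Yorke pair, so this does not work directly. Instead we will use the characterization from \cite{Bernardes2015}: for an operator with a residual set of irregular vectors, dense Li--Yorke chaos follows by a Mycielski-type argument. The relation $R=\{(x,y): x-y \text{ irregular}\}$ contains the set of pairs $(x,y)$ with $x-y\in I$. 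Since $(x,y)\mapsto x-y$ is continuous, open and surjective, this set is residual in $X\times X$. Mycielski's theorem then yields a dense Cantor set, hence an uncountable dense set, all of whose distinct pairs lie in $R$, that is, a dense scrambled set. If this argument proves delicate, we will simply cite the corresponding result of \cite{Bernardes2015}, where it is established that a residual (or even dense) irregular manifold gives dense Li--Yorke chaos.

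For (i) $\Rightarrow$ (iii), we take a densely Li--Yorke chaotic $T$. In particular $T$ is Li--Yorke chaotic, so by \cite[Theorem~9]{Bernardes2015} it has an irregular vector $x$. Then $\sup_n\|T^n x\|=\infty$, which forces $\sup_n\|T^n\|=\infty$, so $T$ is not power-bounded. Alternatively, Proposition~\ref{GenLY} gives that $T$ is not power-bounded on $\mathcal{X}$, and since $\mathcal{X}=X$ by the density of $\mathrm{NS}(T)$, the conclusion is the same.

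The main obstacle is (ii) $\Rightarrow$ (i), namely upgrading a residual set of irregular vectors to a dense scrambled set. We expect to settle it through the Mycielski argument or by citing \cite{Bernardes2015}. The other two implications are direct adaptations of Proposition~\ref{GenLY}.
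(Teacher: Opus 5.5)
Your proof is correct and follows the paper's argument: (iii)$\Rightarrow$(ii) via Banach--Steinhaus together with Proposition~\ref{resconv}, (ii)$\Rightarrow$(i) by appeal to \cite{Bernardes2015}, and (i)$\Rightarrow$(iii) via an irregular vector or Proposition~\ref{GenLY}. One correction to your Mycielski sketch: a Cantor set is compact and hence nowhere dense in an infinite-dimensional Banach space, so it cannot be dense; the dense form of Mycielski's theorem (which uses separability) instead gives a dense countable union of Cantor sets, and that set is still an uncountable dense scrambled set, so the argument goes through.
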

\begin{proof}
If \( \text{(iii)}\) holds, proceeding as the proof of Proposition~\ref{GenLY}, one can show that the set \( \mathcal{R}_2 \) of all vectors \( x \in X \) whose orbit under \( T \) is unbounded is residual. Moreover, by the hypothesis, the set \( \mathcal{R}_1 \) of all vectors \( x \in X \) whose orbit under \( T \) has a subsequence converging to zero is dense in \( X \), and therefore residual by Proposition~\ref{resconv}; then \( \mathcal{R}_1 \cap \mathcal{R}_2 \) is residual and every vector is irregular, which implies \(\text{(ii)} \). \( \text{(ii)} \Rightarrow \text{(i)}\) follows by {\cite[Theorem 10]{Bernardes2015}}, while \( \text{(i)} \Rightarrow \text{(iii)} \) follows once again from Proposition~\ref{GenLY}.
\end{proof}

\section{Li--Yorke chaos for weighted composition operators on spaces of analytic functions}\label{Sec3}

\subsection{Setup and structural conditions}

Let \( H^\infty \) denote the Banach space of bounded analytic functions on \( \mathbb{D} \), equipped with the supremum norm
\[
\|f\|_{\infty} = \sup\{ |f(z)| : z \in \mathbb{D} \}.
\]

Given an analytic self-map \( \varphi \colon \mathbb{D} \to \mathbb{D} \) and $w \in H^\infty$, we associate to \( w \) and \( \varphi \) the following sequence of continuous maps from \( \mathbb{D}\) into \( \mathbb{C} \):
\[w^{(1)} := w \quad \text{and} \quad w^{(n)} := (w \circ \varphi^{n-1}) \cdot \ldots \cdot (w \circ \varphi) \cdot w, \quad \text{for } n \geq 2.\]
By simple definition, it is possible to prove that 
\[
(C_{w,\varphi})^n(f) =  w^{(n)} \cdot (f \circ\varphi^n). 
\]
We observe that if \( w \in H^\infty \), then \( w^{(n)} \) also belong to \( H^\infty \) for all \( n \geq 1 \). Indeed,
\begin{align*}
|w^{(n)}(z)| = |(w \circ \varphi^{n-1})(z) \cdot \ldots \cdot (w \circ \varphi)(z) \cdot w(z)| \leq \|w\|_{\infty}^n, \quad \forall z \in \mathbb{D}.
\end{align*}

The goal is to establish necessary and sufficient conditions for $C_{w,\varphi}$ to be Li--Yorke (resp. mean Li--Yorke) chaotic. To this end, let $X$ be an infinite-dimensional separable Banach space of analytic functions on $\mathbb{D}$ satisfying:

\begin{mdframed}[linewidth=1pt, roundcorner=5pt]
\begin{enumerate}[label=C.\arabic*]
\item $H^\infty$ is dense in $X$.\label{C1}
\item Every weighted composition operator $C_{w,\varphi}$ on $X$, where \( \varphi : \mathbb{D} \to \mathbb{D} \) is an analytic self-map and $w \in H^\infty$, is bounded and satisfies 
    \[
        \|(C_{w,\varphi})^n f\|_X \leq \|w^{(n)}\|_X \|f\|_\infty, \quad \forall f \in H^\infty,\quad \forall n \in \mathbb{N}.
    \]\label{C2}
\end{enumerate}
\end{mdframed}

From now on $X$ will denote a separable Banach space satisfying (\ref{C1}) and (\ref{C2}), \( \varphi : \mathbb{D} \to \mathbb{D} \) an analytic self-map, and $w \in H^\infty$.

The following results provide the justification for the conditions given above.

\begin{theorem}\label{sufLY}
Let $C_{w,\varphi}$ be a weighted composition operator on $X$. If
\[
    \liminf_{n \to \infty} \| w^{(n)} \|_X = 0,
\]
then the following assertions are equivalent:
\begin{itemize}
    \item[(i)] $C_{w,\varphi}$ is densely Li--Yorke chaotic.
    \item[(ii)] $C_{w,\varphi}$ has a residual set of irregular vectors.
    \item[(iii)] $C_{w,\varphi}$ is not power bounded.
\end{itemize}    
\end{theorem}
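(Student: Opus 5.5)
The plan is to reduce the statement to Lemma~\ref{eqLYH2}, applied with $T=C_{w,\varphi}$. Since $X$ is an infinite-dimensional separable Banach space and $C_{w,\varphi}\in\mathcal{L}(X)$ by (\ref{C2}), that lemma yields the equivalence of (i), (ii) and (iii) as soon as we know that $\mathrm{NS}(C_{w,\varphi})$ is dense in $X$. So the whole proof consists in verifying this density hypothesis from the assumption $\liminf_{n\to\infty}\|w^{(n)}\|_X=0$.

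First, I will fix an increasing sequence $(n_k)_k$ with $\|w^{(n_k)}\|_X\to 0$, which exists by hypothesis. Next, let $f\in H^\infty$. Condition (\ref{C2}) gives
\[
\|(C_{w,\varphi})^{n_k}f\|_X\le \|w^{(n_k)}\|_X\,\|f\|_\infty \longrightarrow 0 \quad (k\to\infty).
\]
Thus the orbit of $f$ has a subsequence converging to zero, so $H^\infty\subset \mathrm{NS}(C_{w,\varphi})$. By (\ref{C1}), $H^\infty$ is dense in $X$, and hence so is $\mathrm{NS}(C_{w,\varphi})$. Lemma~\ref{eqLYH2} then gives the equivalences. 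Note that the subsequence $(n_k)$ is the same for every $f$, although only the density of $\mathrm{NS}$ is needed.

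I do not expect a genuine obstacle here; the theorem is essentially a packaging of (\ref{C1}), (\ref{C2}) and Lemma~\ref{eqLYH2}. The only points needing care are the following.
\begin{itemize}
\item One must check that $H^\infty$ is regarded as a subset of $X$, so that $f\in H^\infty$ lies in the domain of $C_{w,\varphi}$. This is implicit in (\ref{C1}).
\item One must check that the hypotheses of Lemma~\ref{eqLYH2}, namely infinite dimension and separability (needed for Proposition~\ref{resconv}), are met. They are, by the standing assumptions on $X$.
\end{itemize}
If desired, I would also spell out the direction (iii)$\Rightarrow$(ii) concretely. Non-power-boundedness together with Banach--Steinhaus makes the set of unbounded orbits residual. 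Proposition~\ref{resconv} makes the dense set $\mathrm{NS}(C_{w,\varphi})$ residual. Their intersection consists of irregular vectors. This argument is already contained in Lemma~\ref{eqLYH2}.
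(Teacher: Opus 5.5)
Your proposal is correct and matches the paper's proof: condition (\ref{C2}) together with $\liminf_{n\to\infty}\|w^{(n)}\|_X=0$ places $H^\infty$ inside $\mathrm{NS}(C_{w,\varphi})$, condition (\ref{C1}) makes that set dense, and Lemma~\ref{eqLYH2} then gives the equivalence of (i)--(iii). Your extra checks, that a single subsequence $(n_k)$ works for every $f\in H^\infty$ and that $X$ is separable and infinite-dimensional as Proposition~\ref{resconv} requires, are accurate but go beyond what the paper's short proof spells out.
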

\begin{proof}
By (\ref{C2}) and the hypothesis, $\liminf_{n\to\infty}\|(C_{w,\varphi})^n f\|_X = 0$ for all $f \in H^\infty$. Thus the set $\mathrm{NS}(C_{w,\varphi})$ contains $H^\infty$, and is dense in $X$ by (\ref{C1}). By Lemma \ref{eqLYH2}, the conclusion follows.
\end{proof}

As a consequence of the previous theorem, we obtain a sufficient condition for \( C_{w,\varphi} \) to be Li--Yorke chaotic, expressed entirely in terms of its symbols.

\begin{corollary}\label{sufHP}
Let $C_{w,\varphi}$ be a weighted composition operator on $X$. Then \( C_{w,\varphi} \) is densely Li-Yorke chaotic if  there is an increasing sequence $(n_k)_{k\in\mathbb{N}}$ of positive integers such that
\begin{itemize}
    \item[i)] $\lim_{k \to \infty}\| w^{(n_k)} \|_X = 0$, and
    \item[ii)] $\sup \{ \| w^{(n)} \|_X: n\in \mathbb{N}\}=\infty.$
\end{itemize}
\begin{proof}
Note that condition \( i) \) implies  
$\displaystyle\lim_{n \to \infty} \inf \| w^{(n)} \|_X = 0.$
Moreover, observe that $\| w^{(n)} \|_X = \| (C_{w,\varphi})^n 1 \|_X \leq \left\| (C_{w,\varphi})^n  \right\|_X.$
Therefore, condition \( ii) \) implies that \( C_{w,\varphi} \) is not power bounded in \( X \). Hence, by Theorem~\ref{sufLY}, the conclusion follows.
\end{proof}
\end{corollary}

The next result provides necessary and sufficient conditions for determining whether a weighted composition operator \( C_{w,\varphi} \) on \( X \) is mean Li--Yorke chaotic.

\begin{theorem}\label{sufMLY}
Let $C_{w,\varphi}$ be a weighted composition operator on $X$. If
\[
\liminf_{N \to \infty} \dfrac{1}{N} \sum_{j=1}^{N}\| w^{(j)} \|_X = 0,
\]
then the following assertion are equivalent:
\begin{itemize}
    \item [i)] \( C_{w,\varphi} \) is densely mean Li--Yorke chaotic;
    \item [ii)] \( C_{w,\varphi} \) has a residual set of absolutely mean irregular vectors;
    \item [iii)] \( C_{w,\varphi} \) is not absolutely Ces\`aro bounded.
    \item [iv)] $\displaystyle \limsup_{N \to \infty} \frac{1}{N} \sum_{j=1}^{N} \|(C_{w,\varphi})^j y_0\|_X > 0 \quad \text{for some } y_0 \in X.$
\end{itemize}
\end{theorem}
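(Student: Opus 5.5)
The plan mirrors the proof of Theorem~\ref{sufLY}: we show that the set $Y$ in \eqref{setACB} is dense in $X$, and then invoke Theorem~\ref{MLYthm}.

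Fix $f \in H^\infty$. By (\ref{C2}), for every $j\ge 1$ we have $\|(C_{w,\varphi})^j f\|_X \le \|w^{(j)}\|_X \|f\|_\infty$. Averaging over $j=1,\dots,N$ gives
\[
\frac{1}{N}\sum_{j=1}^{N}\|(C_{w,\varphi})^j f\|_X \le \|f\|_\infty\,\frac{1}{N}\sum_{j=1}^{N}\|w^{(j)}\|_X .
\]
Taking $\liminf_{N\to\infty}$ and using the hypothesis, the right-hand side has liminf $0$. Hence $f\in Y$, so $H^\infty\subset Y$. By (\ref{C1}), $Y$ is dense in $X$.

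With $Y$ dense, Theorem~\ref{MLYthm} gives the equivalence of: $C_{w,\varphi}$ being mean Li--Yorke chaotic, (ii), (iii) and (iv). The remark following Theorem~\ref{MLYthm} says that for separable $X$, which is our standing assumption, mean Li--Yorke chaos may be replaced by dense mean Li--Yorke chaos. This yields exactly the four assertions (i)--(iv).

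If one prefers not to rely on that remark, the implication (ii)$\Rightarrow$(i) can be argued directly. An absolutely mean irregular vector $x$ generates a mean Li--Yorke set, for instance $\{\lambda x\}$ with $\lambda$ in an uncountable set, and a residual set of such vectors in a separable Baire space allows a dense uncountable mean Li--Yorke set, as in \cite{BernardesPerisSanchez}. Conversely, (i)$\Rightarrow$ mean Li--Yorke chaos is trivial.

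There is no real obstacle: the only substantive step is the density of $Y$, which is immediate from (\ref{C1}), (\ref{C2}) and the hypothesis. The one point needing care is the passage from mean Li--Yorke to densely mean Li--Yorke, which we take from the cited result in the separable setting.
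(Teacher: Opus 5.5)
Your proposal is correct and follows the paper's proof exactly: (\ref{C2}) and the hypothesis give $H^\infty\subset Y$, (\ref{C1}) makes $Y$ dense, and Theorem~\ref{MLYthm} with the separability remark yields (i)--(iv). Your optional ``direct'' argument for (ii)$\Rightarrow$(i) is only a sketch, since obtaining a dense mean Li--Yorke set from a residual set of absolutely mean irregular vectors needs an extra Mycielski-type argument on pairs whose difference is irregular, but the main proof does not depend on it.
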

\begin{proof}
By (\ref{C2}) and the hypothesis, $\displaystyle\liminf_{N\to\infty}\frac{1}{N}\sum_{j=1}^N\|(C_{w,\varphi})^j f\|_X = 0$ for all $f \in H^\infty$. Hence the set
\[
Y = \left\{ f \in X : \liminf_{N \to \infty} \frac{1}{N} \sum_{j=1}^N \|(C_{w,\varphi})^j f\|_X = 0 \right\}
\]
contains $H^\infty$, and is dense in $X$ by (\ref{C1}). Since $X$ is separable, the conclusion follows from Theorem~\ref{MLYthm}.

\end{proof}

As a consequence of the previous theorem, we obtain a sufficient condition for \( C_{w,\varphi} \) to be mean Li--Yorke chaotic, expressed entirely in terms of its symbols.

\begin{corollary}\label{sufMLYHP}
Let $C_{w,\varphi}$ be a weighted composition operator on $X$. Then \( C_{w,\varphi} \) is densely mean Li-Yorke chaotic if  there is an increasing sequence $(N_k)_{k\in\mathbb{N}}$ of positive integers such that
\[
\displaystyle\lim_{k \to \infty}\dfrac{1}{N_k}\sum_{j=1}^{N_k} \| w^{(j)} \|_X = 0\quad\text{and}\quad\displaystyle\sup_{N\in \mathbb{N}}  \dfrac{1}{N}\sum_{j=1}^{N}\| w^{(j)} \|_X=\infty.
\]
\end{corollary}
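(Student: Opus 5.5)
The plan is to mirror the proof of Corollary~\ref{sufHP}, reducing everything to Theorem~\ref{sufMLY}. The first hypothesis, that $\frac{1}{N_k}\sum_{j=1}^{N_k}\|w^{(j)}\|_X \to 0$ along the increasing sequence $(N_k)$, immediately gives
\[
\liminf_{N\to\infty}\frac{1}{N}\sum_{j=1}^{N}\|w^{(j)}\|_X = 0 ,
\]
since the Ces\`aro averages are nonnegative and a subsequence of them tends to zero. Hence Theorem~\ref{sufMLY} applies to $C_{w,\varphi}$ on $X$. It then suffices to verify one of its equivalent conditions; I will verify (iii), that $C_{w,\varphi}$ is not absolutely Ces\`aro bounded.

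For this, I will test the operator on the constant function $1\in H^\infty\subset X$. By the formula $(C_{w,\varphi})^j f = w^{(j)}\cdot (f\circ\varphi^j)$, we get $(C_{w,\varphi})^j 1 = w^{(j)}$. Therefore
\[
\frac{1}{N}\sum_{j=1}^{N}\|(C_{w,\varphi})^j 1\|_X = \frac{1}{N}\sum_{j=1}^{N}\|w^{(j)}\|_X .
\]
By the second hypothesis, the supremum over $N$ of the right-hand side is infinite. Since $\|1\|_X$ is a finite constant, no $C>0$ can satisfy
\[
\sup_N \frac{1}{N}\sum_{j=1}^N\|(C_{w,\varphi})^j 1\|_X\le C\|1\|_X .
\]
So $C_{w,\varphi}$ is not absolutely Ces\`aro bounded; equivalently, $x=1$ witnesses condition (ii) of Theorem~\ref{CBthm}.

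Applying the implication (iii)$\Rightarrow$(i) of Theorem~\ref{sufMLY} then yields that $C_{w,\varphi}$ is densely mean Li--Yorke chaotic. There is no real obstacle here. The only points to check are that $1\in X$, which holds because $H^\infty\subset X$ by (\ref{C1}), and that $1$ is a nonzero vector, so that the blow-up of the averages genuinely contradicts absolute Ces\`aro boundedness.
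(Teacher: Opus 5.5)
Your proposal is correct and follows essentially the same route as the paper. Both arguments deduce the $\liminf$ hypothesis of Theorem~\ref{sufMLY} from the first condition, and both use $(C_{w,\varphi})^j 1 = w^{(j)}$ together with the second condition to show that $C_{w,\varphi}$ is not absolutely Ces\`aro bounded; you do this directly from the definition, while the paper goes through Theorem~\ref{CBthm}. The conclusion then follows from Theorem~\ref{sufMLY} in both cases.
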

\begin{proof}
Note that condition \( i) \) implies  $\displaystyle\lim_{N \to \infty} \inf \dfrac{1}{N}\displaystyle\sum_{j=1}^{N}\| w^{(j)} \|_X = 0.$ Moreover, observe that $\dfrac{1}{N}\displaystyle\sum_{j=1}^{N}\| w^{(j)} \|_X = \dfrac{1}{N}\sum_{j=1}^{N}\| (C_{w,\varphi})^j 1 \|_X.$
Therefore, condition \( ii) \) and Theorem \ref{CBthm} implies that \( C_{w,\varphi} \) is not absolutely Cesàro bounded. Hence, by Theorem~\ref{sufMLY}, the conclusion follows.
\end{proof}

In the following subsections, we investigate which spaces $X$ satisfy conditions (\ref{C1}) and (\ref{C2}).

\subsection{The Hardy spaces \texorpdfstring{$H^p$}{Hp} for \texorpdfstring{$1\leq p<\infty$}{1<p<infinity}}

Let \( \mathbb{T} \) denote the unit circle in the complex plane. The Hardy space \( H^p \) (\( p > 0 \)) is the space consisting of functions \( f \) holomorphic in \( \mathbb{D} \) for which
\[
\|f\|_{p}^p := \sup_{0 < r < 1} \int_{\mathbb{T}} |f(rz)|^p \, dm(z) < \infty
\]
where \( m \) is normalized Lebesgue measure on \( \mathbb{T} \). The \( H^p \) spaces are separable Banach spaces for \( 1\leq p <\infty \).

The space $H^2$ is a Hilbert space with inner product
\[
\langle f,g \rangle = \sum_{n=0}^{\infty} a_n \overline{b_n},
\]
where $(a_n)_{n\in\mathbb{N}}$ and $(b_n)_{n\in\mathbb{N}}$ are the Maclaurin coefficients for $f$ and $g$ respectively.

If the weight function \( w \) belongs to \( H^\infty \) and $\varphi$ is an analytic self-map on $\mathbb{D}$, it is known that $\varphi$ induces a bounded composition operator on $H^p$ (see \cite{Ryff1966}) and the multiplication operator induced by $w$ is also bounded on $H^p$, then \( C_{w,\varphi} \) is a bounded operator on \( H^p \).

The next Lemma shows that condition (\ref{C2}) holds for $X=H^p$.

\begin{lemma}\label{lemwn}
For \( 1\leq p <\infty \), let \( C_{w,\varphi}\) be a weighted composition operator on \( H^p \). Then
\[
\| (C_{w,\varphi})^n f \|_p \leq \| w^{(n)} \|_p\| f\|_\infty,  \quad \forall f\in H^\infty,  \quad \forall n \in \mathbb{N}.
\]
\end{lemma}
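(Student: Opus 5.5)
The estimate is pointwise. For $f\in H^\infty$ we use the identity $(C_{w,\varphi})^n f = w^{(n)}\cdot (f\circ\varphi^n)$. Since $\varphi^n$ maps $\mathbb{D}$ into $\mathbb{D}$, we have $|f(\varphi^n(z))|\le \|f\|_\infty$ for every $z\in\mathbb{D}$. Hence
\[
|(C_{w,\varphi})^n f(z)| \le |w^{(n)}(z)|\,\|f\|_\infty, \qquad z\in\mathbb{D}.
\]

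Next I pass to the integral means. Fix $0<r<1$, raise the pointwise inequality at $z=r\zeta$, $\zeta\in\mathbb{T}$, to the power $p$, and integrate with respect to $dm(\zeta)$. This gives
\[
\int_{\mathbb{T}} |(C_{w,\varphi})^n f(r\zeta)|^p\,dm(\zeta) \le \|f\|_\infty^p \int_{\mathbb{T}} |w^{(n)}(r\zeta)|^p\,dm(\zeta) \le \|f\|_\infty^p\,\|w^{(n)}\|_p^p .
\]
Taking the supremum over $r$ on the left and then $p$-th roots yields $\|(C_{w,\varphi})^n f\|_p\le \|w^{(n)}\|_p\|f\|_\infty$, which is the claim.

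The only point to check is that $w^{(n)}\in H^p$, so that the right-hand side is finite. This holds because $w^{(n)}$ is analytic, being a product of compositions of analytic maps, and $\|w^{(n)}\|_\infty\le\|w\|_\infty^n$ was observed earlier. Bounded analytic functions lie in $H^p$ with $\|w^{(n)}\|_p\le \|w^{(n)}\|_\infty$, since $m$ is a probability measure. There is no real obstacle: the argument uses only the monotone integral-mean definition of the $H^p$ norm and never needs boundary values.
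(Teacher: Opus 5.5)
Your proof is correct and follows essentially the same route as the paper: you bound $|f(\varphi^n(z))|$ pointwise by $\|f\|_\infty$, insert this into the integral means over the circles of radius $r$, and then take the supremum over $r$. Your additional check that $w^{(n)}\in H^\infty\subset H^p$ is a harmless remark that the paper leaves implicit.
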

\begin{proof}
Note first that
\begin{align*}
\|  w^{(n)} \cdot (f \circ\varphi^n)\|^p_p &=\sup_{0 < r < 1} \int_{\mathbb{T}} |w^{(n)}(rz)(f \circ\varphi^n)(rz)|^p \, dm(z)\\ & \leq  \| f\|_{\infty}^p\sup_{0<r<1}\int_{\mathbb{T}}| w^{(n)}(rz)|^p \ dm(z) \leq  \| w^{(n)} \|^p_p \|f\|_\infty^p.  
\end{align*}
Therefore, $\| (C_{w,\varphi})^n f\|_p  \leq \| w^{(n)} \|_p \|f\|_\infty, \ \ \forall f\in H^\infty.   $ 
\end{proof}

It is worth noting that the closure of \( H^\infty \) in the \( H^p \)-norm topology is the whole space \( H^p \). Indeed, since $\operatorname{span} \{ z^n \}_{n \geq 0} \subset H^\infty \quad \text{and} \quad H^p = \overline{\operatorname{span} \{ z^n \}_{n \geq 0}}^{H^p},$ (see \cite{Duren1970, Rosemblum1994} for instance) it follows that  \( \overline{H^\infty}^{H^p} = H^p \). Therefore, condition (\ref{C1}) holds.

\subsection{The weighted Bergman spaces \texorpdfstring{$A^p_\beta$}{A2beta} for \texorpdfstring{$-1<\beta<\infty$ and $1<p<\infty$}{-1<beta<infinity} }

Let \( dA(z) \) be the normalized area measure on \( \mathbb{D} \), and \( -1 < \beta < \infty \). The weighted Bergman space \( A^p_\beta(\mathbb{D}) \) is the space of all analytic functions in \( \mathbb{D} \) such that the norm
\[
\|f\|^p_{\beta_p} :=  \int_{\mathbb{D}} |f(z)|^p \, dA_\beta(z) < \infty,
\]
where \( dA_\beta(z) = (\beta + 1)(1 - |z|^2)^\beta \, dA(z) \). The weighted Bergman space \( A^p_\beta \) is a separable infinite-dimensional Banach space. It is straightforward to verify that \( H^\infty \) is a subspace of \( A^p_\beta \) and \( \|f\|_{\beta_p} \leq C \|f\|_\infty \) for some $C>0$. 

The weighted Bergman space $A^2_\beta$ is a Hilbert space with the inner product
\begin{equation*}
\langle f,g \rangle
=
\sum_{n=0}^{\infty}
\frac{n!\,\Gamma(2+\beta)}{\Gamma(n+2+\beta)}
\, \widehat{f}(n)\,\overline{\widehat{g}(n)},
\end{equation*}
where $(\widehat{f}(n))_{n\in\mathbb{N}}$ and $(\widehat{g}(n))_{n\in\mathbb{N}}$ are the sequences of Maclaurin coefficients for $f$ and $g$ respectively, and $\Gamma$ is the Gamma function.

If the weight function \( w \) belongs to \( H^\infty \) and $\varphi$ is an analytic self-map on $\mathbb{D}$, it is known that $\varphi$ induces a bounded composition operator on \( A^p_\beta\) (see \cite[Proposition~7.1]{Blasco1992}) and the multiplication operator induced by $w$ is also bounded on \( A^p_\beta \), then \( C_{w,\varphi} \) is a bounded operator on \( A^p_\beta \).

The next Lemma shows that condition (\ref{C2}) holds for $X=A^p_\beta$.

\begin{lemma}\label{lemwnWBS}
For \( 1< p <\infty \), let \( C_{w,\varphi}\) be a weighted composition operator on \( A^p_\beta \). Then
\[
\| (C_{w,\varphi})^n f \|_{\beta_p} \leq \| w^{(n)} \|_{\beta_p}\| f\|_\infty,  \quad \forall f\in H^\infty,  \quad \forall n \in \mathbb{N}.
\]
\end{lemma}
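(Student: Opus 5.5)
The plan is to mirror the argument of Lemma~\ref{lemwn}, replacing the integral means over circles by the area integral defining the norm of $A^p_\beta$. First I would use the iterate formula already recorded in Section~\ref{Sec3}, namely $(C_{w,\varphi})^n f = w^{(n)}\cdot(f\circ\varphi^n)$. Then I would note that $\varphi^n$ is an analytic self-map of $\mathbb{D}$, so for $f\in H^\infty$ the pointwise bound
\[
|(f\circ\varphi^n)(z)| \le \sup_{u\in\mathbb{D}}|f(u)| = \|f\|_\infty
\]
holds for every $z\in\mathbb{D}$.

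Next I would insert this into the definition of the norm:
\[
\|(C_{w,\varphi})^n f\|_{\beta_p}^p = \int_{\mathbb{D}} |w^{(n)}(z)|^p\,|f(\varphi^n(z))|^p\,dA_\beta(z) \le \|f\|_\infty^p\int_{\mathbb{D}}|w^{(n)}(z)|^p\,dA_\beta(z) = \|f\|_\infty^p\,\|w^{(n)}\|_{\beta_p}^p .
\]
Here $w^{(n)}\in H^\infty\subset A^p_\beta$, so the right-hand side is finite. Taking $p$-th roots gives the claimed inequality for every $n\in\mathbb{N}$ and every $f\in H^\infty$.

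There is no real obstacle. The only points to check are that the measure $dA_\beta$ is positive, which holds since $\beta>-1$, so that the monotonicity of the integral applies, and that $w^{(n)}$ is analytic and bounded, so that it lies in $A^p_\beta$. Both were established earlier in the paper. Unlike the Hardy case, no supremum over $r$ is needed, so the argument is even more direct than that of Lemma~\ref{lemwn}.
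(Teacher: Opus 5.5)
Your proposal is correct and is essentially the paper's argument: write $(C_{w,\varphi})^n f = w^{(n)}\cdot(f\circ\varphi^n)$, bound $|f\circ\varphi^n|$ pointwise by $\|f\|_\infty$, integrate against the positive measure $dA_\beta$, and take $p$-th roots. Your extra checks (positivity of $dA_\beta$ for $\beta>-1$ and $w^{(n)}\in H^\infty\subset A^p_\beta$) are exactly the facts the paper's one-line computation uses without stating them.
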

\begin{proof}
Note first that
\begin{align*}
\|  w^{(n)} \cdot (f \circ\varphi^n)\|^p_{\beta_p} &=\int_{\mathbb{D}} |w^{(n)}(z)(f \circ\varphi^n)(z)|^p \, dA_\beta(z) & \leq  \| f\|_{\infty}^p\int_{\mathbb{D}}| w^{(n)}(z)|^p \ dA_\beta(z)\\
& \leq  \| w^{(n)} \|^p_{\beta_p} \|f\|_\infty^p.  
\end{align*}
Therefore, $\| (C_{w,\varphi})^n f\|_{\beta_p}  \leq \| w^{(n)} \|_{\beta_p} \|f\|_\infty, \ \ \forall f\in H^\infty.$ 
\end{proof}

It is worth noting that the closure of \( H^\infty \) in the \( A^p_\beta \)-norm topology is the whole space \( A^p_\beta \). Indeed, since $\operatorname{span} \{ z^n \}_{n \geq 0} \subset H^\infty$ and the polinomials are dense in  $A^p_\beta$ it follows that \( \overline{H^\infty}^{A^p_\beta} = A^p_\beta \) (see \cite{HKZ2000} for a reference).  Therefore, condition (\ref{C1}) holds.


\begin{corollary}
Suppose that there is an increasing sequence $(n_k)_{k\in\mathbb{N}}$ of positive integers such that
\begin{itemize}
    \item[i)] $\lim_{k \to \infty}\| w^{(n_k)} \|_2 = 0$, and
    \item[ii)] $\sup \{ \| w^{(n)} \|_{\beta_2}: n\in \mathbb{N}\}=\infty$
\end{itemize}
Then the weighted composition operator \( C_{w,\varphi} \) is densely Li--Yorke chaotic on both \( H^2 \) and \( A^2_{\beta} \).
\begin{proof}
According to \cite{Gori2006}, we have $\|f\|_{\beta} \leq C \|f\|_{2}$, for all $f \in H^{2}$. Then the result follows by Corollary \ref{sufHP} for \(X = A^{2}_{\beta}\) and \(X = H^{2}\), respectively.
\end{proof}
\end{corollary}

\begin{corollary}
Suppose that there is an increasing sequence $(N_k)_{k\in\mathbb{N}}$ of positive integers such that
\[
\displaystyle\lim_{k \to \infty}\dfrac{1}{N_k}\sum_{j=1}^{N_k} \| w^{(j)} \|_2 = 0\quad\text{and}\quad\displaystyle\sup_{N\in \mathbb{N}}  \dfrac{1}{N}\sum_{j=1}^{N}\| w^{(j)} \|_{\beta_2}=\infty.
\]
Then the weighted composition operator \( C_{w,\varphi} \) is densely mean Li--Yorke chaotic on both \( H^2 \) and \( A^2_\beta \).
\end{corollary}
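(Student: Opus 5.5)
The plan is to mirror the proof of the preceding Li--Yorke corollary and apply Corollary~\ref{sufMLYHP} once with $X=H^2$ and once with $X=A^2_\beta$. The bridge between the two hypotheses is the continuous inclusion $H^2\hookrightarrow A^2_\beta$ from \cite{Gori2006}: there is $C>0$ with $\|f\|_{\beta_2}\le C\|f\|_2$ for all $f\in H^2$. Since each $w^{(j)}\in H^\infty\subset H^2$, this gives
\[
\frac{1}{N}\sum_{j=1}^{N}\|w^{(j)}\|_{\beta_2}\le \frac{C}{N}\sum_{j=1}^{N}\|w^{(j)}\|_2 \qquad\text{for all } N\in\mathbb{N}.
\]
Both spaces satisfy (\ref{C1}) and (\ref{C2}), by Lemma~\ref{lemwn} and the density remark for $H^2$, and by Lemma~\ref{lemwnWBS} and the density of polynomials for $A^2_\beta$.

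For $X=A^2_\beta$, the displayed inequality transfers the first hypothesis: the Ces\`aro means $\frac{1}{N_k}\sum_{j\le N_k}\|w^{(j)}\|_{\beta_2}$ also tend to $0$. The second hypothesis is already stated in the $\beta_2$ norm. Hence both conditions of Corollary~\ref{sufMLYHP} hold in $A^2_\beta$, and $C_{w,\varphi}$ is densely mean Li--Yorke chaotic there.

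For $X=H^2$, the first condition of Corollary~\ref{sufMLYHP} is exactly the hypothesis. For the second, the inequality read in the other direction gives
\[
\sup_N\frac{1}{N}\sum_{j=1}^{N}\|w^{(j)}\|_2\ \ge\ C^{-1}\sup_N\frac{1}{N}\sum_{j=1}^{N}\|w^{(j)}\|_{\beta_2}=\infty .
\]
So Corollary~\ref{sufMLYHP} applies in $H^2$ as well.

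The only step needing care is the direction of the norm comparison. Smallness must be transferred from $H^2$ down to $A^2_\beta$, and largeness from $A^2_\beta$ up to $H^2$, and the inclusion inequality does both. I would also check that the proof of Corollary~\ref{sufMLYHP} is sound: it reads its second condition through $\|(C_{w,\varphi})^j1\|_X$ together with Theorem~\ref{CBthm} to conclude that $C_{w,\varphi}$ is not absolutely Ces\`aro bounded. Beyond these checks, no new estimates are required.
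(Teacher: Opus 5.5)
Your proposal is correct and matches the paper's proof: the paper likewise invokes the inclusion $\|f\|_{\beta_2}\le C\|f\|_2$ from \cite{Gori2006} and then applies Corollary~\ref{sufMLYHP} with $X=A^2_\beta$ and with $X=H^2$. You also spell out what the paper leaves implicit, namely that this single inequality carries the vanishing Ces\`aro means from $H^2$ down to $A^2_\beta$ and the unbounded Ces\`aro means from $A^2_\beta$ up to $H^2$.
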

\begin{proof}
According to \cite{Gori2006}, we have $\|f\|_{\beta} \leq C \|f\|_{2}$, for all $f \in H^{2}$. Then the result follows by Corollary \ref{sufMLYHP} for \(X = A^{2}_{\beta}\) and \(X = H^{2}\), respectively.
\end{proof}

\section{The \texorpdfstring{$H^\infty$}{Hinfinity} space}\label{Sec4}

We now turn our attention to weighted composition operators on \( H^\infty \). Although $H^\infty$ satisfies conditions (\ref{C1}) and (\ref{C2}), since \( H^\infty \) is not separable the results of Section~\ref{Sec3} do not apply directly. Nevertheless, the underlying ideas can be suitably adapted to this setting.

\begin{lemma}\label{lemCinf}
For any weighted composition operator \( C_{w,\varphi} \) on \( H^\infty \):
 \[
 \| (C_{w,\varphi})^n  \|= \| w^{(n)} \|_{\infty}, \quad \forall n \in \mathbb{N}.
 \]
\end{lemma}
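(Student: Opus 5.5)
We prove the two inequalities $\|(C_{w,\varphi})^n\| \le \|w^{(n)}\|_\infty$ and $\|(C_{w,\varphi})^n\| \ge \|w^{(n)}\|_\infty$ separately, using the formula $(C_{w,\varphi})^n f = w^{(n)}\cdot (f\circ\varphi^n)$ established at the beginning of Section~\ref{Sec3}.

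For the upper bound, take $f\in H^\infty$ and $z\in\mathbb{D}$. Since $\varphi^n(z)\in\mathbb{D}$, we have $|f(\varphi^n(z))|\le \|f\|_\infty$, hence
\[
|(C_{w,\varphi})^n f(z)| = |w^{(n)}(z)|\,|f(\varphi^n(z))| \le \|w^{(n)}\|_\infty \|f\|_\infty .
\]
Taking the supremum over $z\in\mathbb{D}$ gives $\|(C_{w,\varphi})^n f\|_\infty \le \|w^{(n)}\|_\infty\|f\|_\infty$, and therefore $\|(C_{w,\varphi})^n\|\le \|w^{(n)}\|_\infty$. This is exactly condition (\ref{C2}) for $X=H^\infty$.

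For the lower bound, test the operator on the constant function $1$, which has $\|1\|_\infty=1$. As in the proof of Corollary~\ref{sufHP}, $(C_{w,\varphi})^n 1 = w^{(n)}\cdot(1\circ\varphi^n)=w^{(n)}$. Hence
\[
\|(C_{w,\varphi})^n\| \ge \|(C_{w,\varphi})^n 1\|_\infty = \|w^{(n)}\|_\infty .
\]
Combining the two inequalities yields the equality.

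The argument is elementary. The only point needing care is that the norm on $H^\infty$ is the supremum over the open disk, so bounding $|f\circ\varphi^n|$ by $\|f\|_\infty$ requires only $\varphi^n(\mathbb{D})\subset\mathbb{D}$; no boundary values are involved.
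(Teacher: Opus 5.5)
Your proof is correct and matches the paper's argument: the upper bound comes from the pointwise estimate $|w^{(n)}(z)|\,|f(\varphi^n(z))|\le \|w^{(n)}\|_\infty\|f\|_\infty$, and the lower bound from applying $(C_{w,\varphi})^n$ to the constant function $1$.
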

\begin{proof}
It is clear by the definition of the norm that
 \[
\| (C_{w,\varphi})^n f \|_\infty \leq \| w^{(n)} \|_{\infty}\| f\|_\infty, \quad \forall n \in \mathbb{N}.
 \]   
 Then $ \| (C_{w,\varphi})^n  \|\leq \| w^{(n)} \|_{\infty}.$ On the other hand,
 \[
\| w^{(n)} \|_{\infty} = \| (C_{w,\varphi})^n 1 \|_{\infty}\leq  \| (C_{w,\varphi})^n  \|, \quad \forall n \in \mathbb{N}.
 \]    
\end{proof}

\begin{theorem}\label{HinfWCO}
Consider a weighted composition operator \( C_{w,\varphi} \) on \( H^\infty  \). If $$\displaystyle\liminf_{n \to \infty} \| w^{(n)} \|_\infty = 0,$$ then the following assertions are equivalent:
\begin{itemize}
    \item[(i)] \( C_{w,\varphi} \) is Li-Yorke chaotic;
    \item[(ii)] \( C_{w,\varphi} \) has a residual set of irregular vectors;
    \item[(iii)] \( C_{w,\varphi} \) is not power-bounded.
\end{itemize}
\end{theorem}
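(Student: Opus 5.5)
Since $H^\infty$ is not separable, we cannot invoke Proposition~\ref{resconv} or Lemma~\ref{eqLYH2}. The plan is to avoid them altogether. By Lemma~\ref{lemCinf} the hypothesis controls the full operator norm, and this is much stronger than a statement on a dense subset. Write $T=C_{w,\varphi}$. Lemma~\ref{lemCinf} gives $\|T^n\|=\|w^{(n)}\|_\infty$. So the hypothesis yields an increasing sequence $(n_k)$ with $\|T^{n_k}\|\to 0$. Hence for every $f\in H^\infty$,
\[
\liminf_{n\to\infty}\|T^n f\|_\infty\le \lim_{k\to\infty}\|T^{n_k}\|\,\|f\|_\infty=0 .
\]
That is, $\mathrm{NS}(T)=H^\infty$ in its entirety. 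In particular, a vector $f$ is irregular for $T$ if and only if its orbit is unbounded.

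We then prove the cycle (iii)$\Rightarrow$(ii)$\Rightarrow$(i)$\Rightarrow$(iii).

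For (iii)$\Rightarrow$(ii), suppose $T$ is not power-bounded. Consider the sets
\[
F_m:=\{f\in H^\infty:\ \|T^nf\|_\infty\le m \text{ for all } n\}, \qquad m\in\mathbb{N}.
\]
Each $F_m$ is closed, being an intersection of closed sets, and the set of vectors with bounded orbit is $\bigcup_m F_m$. If some $F_m$ contained a ball $B(f_0,r)$, the standard argument would give $\|T^n\|\le 2m/r$ for all $n$, contradicting (iii). So each $F_m$ is nowhere dense, which is exactly the Banach--Steinhaus argument; it uses only that $H^\infty$ is a Banach space, hence a Baire space, not separability. Therefore the set of vectors with unbounded orbit is residual. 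By the observation above, every such vector is irregular, so (ii) holds.

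For (ii)$\Rightarrow$(i), $H^\infty$ is a Baire space, so a residual set is nonempty and $T$ has an irregular vector. By \cite[Theorem~9]{Bernardes2015}, which is stated for arbitrary Banach spaces, $T$ is Li--Yorke chaotic. For (i)$\Rightarrow$(iii), the same theorem gives an irregular vector $f$. Then $\limsup_n\|T^nf\|_\infty=\infty$, so $\sup_n\|T^n\|=\infty$ and $T$ is not power-bounded.

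The only step requiring care is (iii)$\Rightarrow$(ii). There one must check that the Baire-category argument needs no separability: it does not, since completeness suffices. One must also check that residuality of irregular vectors comes for free: it does, because the norm convergence $\|T^{n_k}\|\to0$ makes every vector satisfy the $\liminf$ condition. This norm convergence replaces the appeal to Proposition~\ref{resconv} used in the separable setting.
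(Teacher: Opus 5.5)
Your proof is correct and follows the same route as the paper. Lemma~\ref{lemCinf} together with the hypothesis places every $f\in H^\infty$ in $\mathrm{NS}(C_{w,\varphi})$; Banach--Steinhaus, via the Baire property (which needs only completeness), makes the set of unbounded-orbit vectors residual and hence irregular; and the remaining implications come from the irregular-vector characterization of Li--Yorke chaos. You additionally supply the details the paper leaves to ``proceeding as in Proposition~\ref{GenLY}'', namely the nowhere density of the sets $F_m$ and the fact that separability is not needed.
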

\begin{proof}
By Lemma~\ref{lemCinf} and hypothesis we have that
\[
\liminf_{n \to \infty} \| (C_{w,\varphi})^n f \|_{\infty} = 0, \quad \forall f \in H^{\infty}.
\]
If \( \text{(iii)}\) holds, proceeding as the proof of Proposition~\ref{GenLY}, one can show that the set \( \mathcal{R} \) of all vectors \( f \in H^\infty \) whose orbit under \( C_{w,\varphi} \) is unbounded is residual. Moreover, by hypothesis, every vector in \(\mathcal{R}\) is irregular, which implies \(\text{(ii)}\). The implication \(\text{(ii)} \Rightarrow \text{(i)}\) is straightforward, and \(\text{(i)} \Rightarrow \text{(iii)}\) follows as in Proposition~\ref{GenLY}.
\end{proof}

As a consequence of the previous theorem, we obtain a sufficient condition for \( C_{w,\varphi} \) to be Li--Yorke chaotic, expressed entirely in terms of its symbols.

\begin{corollary}
Let \( C_{w,\varphi} \) be a weighted composition operator on \( H^\infty \). Then \( C_{w,\varphi} \) is Li-Yorke chaotic if  there is an increasing sequence $(n_k)_{k\in\mathbb{N}}$ of positive integers such that
\begin{itemize}
    \item[i)] $\displaystyle\lim_{k \to \infty}\| w^{(n_k)} \|_\infty = 0$, and
    \item[ii)] $\displaystyle\sup \{ \| w^{(n)} \|_\infty: n\in \mathbb{N}\}=\infty.$
\end{itemize}
\end{corollary}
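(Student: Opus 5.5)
The plan is to reduce the statement to Theorem~\ref{HinfWCO}, in the same way that Corollary~\ref{sufHP} was deduced from Theorem~\ref{sufLY}. That theorem needs two things: that $\liminf_{n\to\infty}\|w^{(n)}\|_\infty = 0$, and that $C_{w,\varphi}$ is not power-bounded on $H^\infty$. Once both are checked, the implication (iii)$\Rightarrow$(i) of Theorem~\ref{HinfWCO} gives Li--Yorke chaos. In fact it gives more: a residual set of irregular vectors.

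First, condition i) supplies a subsequence along which $\|w^{(n_k)}\|_\infty\to 0$. Since the norms are nonnegative, this immediately gives
\[
\liminf_{n\to\infty}\|w^{(n)}\|_\infty = 0,
\]
which is the standing hypothesis of Theorem~\ref{HinfWCO}.

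Second, Lemma~\ref{lemCinf} gives the identity $\|(C_{w,\varphi})^n\| = \|w^{(n)}\|_\infty$ for every $n$. Condition ii) therefore says exactly that $\sup_n \|(C_{w,\varphi})^n\| = \infty$, so $C_{w,\varphi}$ is not power-bounded. Even without the equality, the inequality $\|w^{(n)}\|_\infty = \|(C_{w,\varphi})^n 1\|_\infty \le \|(C_{w,\varphi})^n\|$ would suffice here.

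There is no real obstacle in this argument. The only point needing care is that Theorem~\ref{HinfWCO} is stated for the non-separable space $H^\infty$. Its proof avoids Proposition~\ref{resconv}, because the liminf hypothesis makes \emph{every} vector have a subsequence of iterates tending to zero, so I rely on it as stated. Alternatively, one can build an irregular vector directly and conclude via \cite[Theorem~9]{Bernardes2015}. Banach--Steinhaus applied to the unbounded family $\{(C_{w,\varphi})^n\}$ gives some $f\in H^\infty$ with unbounded orbit. Since $\|(C_{w,\varphi})^{n_k} f\|_\infty \le \|w^{(n_k)}\|_\infty\|f\|_\infty \to 0$, this $f$ is irregular.
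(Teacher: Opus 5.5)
Your proof is correct and is the deduction the paper intends. The paper gives no separate proof; it states the corollary as a direct consequence of Theorem~\ref{HinfWCO}, mirroring Corollary~\ref{sufHP}: condition i) gives $\liminf_{n}\|w^{(n)}\|_\infty=0$, and condition ii) with Lemma~\ref{lemCinf} shows that $C_{w,\varphi}$ is not power-bounded. Your alternative construction of an irregular vector via Banach--Steinhaus and $\|(C_{w,\varphi})^{n_k}f\|_\infty\le\|w^{(n_k)}\|_\infty\|f\|_\infty$ is also valid; it simply unpacks the proof of Theorem~\ref{HinfWCO}, with the small bonus of making clear that no separability is needed.
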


The next result provides necessary and sufficient conditions for determining whether a weighted composition operator \( C_{w,\varphi} \) on \( H^\infty \) is mean Li--Yorke chaotic.

\begin{theorem}\label{MLYHinf}
Consider a weighted composition operator \( C_{w,\varphi} \) on \( H^\infty  \). If
\[
\liminf_{N \to \infty} \dfrac{1}{N}\sum_{j=1}^{N}\| w^{(j)} \|_\infty = 0,
\]
then the following assertion are equivalent:
\begin{itemize}
    \item [i)] \( C_{w,\varphi} \) is mean Li--Yorke chaotic;
    \item [ii)] \( C_{w,\varphi} \) has a residual set of absolutely mean irregular vectors;
    \item [iii)] \( C_{w,\varphi} \) is not absolutely Ces\`aro bounded.
    \item [iv)] $\displaystyle \limsup_{N \to \infty} \frac{1}{N} \sum_{j=1}^{N} \|(C_{w,\varphi})^j y_0\|_{\infty} > 0 \quad \text{for some } y_0 \in H^\infty.$
\end{itemize}

\end{theorem}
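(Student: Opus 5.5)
The plan is to reduce Theorem~\ref{MLYHinf} to Theorem~\ref{MLYthm}, which is stated for an arbitrary Banach space and needs no separability. Only the final remark after Theorem~\ref{MLYthm}, which upgrades to \emph{densely} mean Li--Yorke chaotic, uses separability, and assertion i) here asks only for mean Li--Yorke chaos. So the one thing to check is that the set
\[
Y = \left\{ f \in H^\infty : \liminf_{N \to \infty} \frac{1}{N} \sum_{j=1}^N \|(C_{w,\varphi})^j f\|_\infty = 0 \right\}
\]
is dense in $H^\infty$. In fact we will show $Y = H^\infty$.

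By Lemma~\ref{lemCinf}, or directly from the sup-norm estimate, we have $\|(C_{w,\varphi})^j f\|_\infty \le \|w^{(j)}\|_\infty \|f\|_\infty$ for every $f\in H^\infty$ and every $j$. Averaging over $j=1,\dots,N$ gives
\[
\frac{1}{N}\sum_{j=1}^N \|(C_{w,\varphi})^j f\|_\infty \le \|f\|_\infty\,\frac{1}{N}\sum_{j=1}^N \|w^{(j)}\|_\infty .
\]
Now take the $\liminf$ as $N\to\infty$ and use the hypothesis $\liminf_N \frac1N\sum_{j=1}^N \|w^{(j)}\|_\infty=0$. This shows the left-hand side has $\liminf$ equal to $0$, so every $f$ lies in $Y$. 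Hence $Y=H^\infty$, and it is trivially dense.

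Theorem~\ref{MLYthm} then applies with $X=H^\infty$ and $T=C_{w,\varphi}$, and it gives exactly the equivalence of i)--iv) as stated. The theorem is proved in a general Banach space via Baire category arguments, and $H^\infty$ is complete, hence a Baire space, so residuality in ii) makes sense.

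The main obstacle is confirming that the cited result from \cite{BernardesPerisSanchez} does not secretly assume separability for the implications among i)--iv). The paper states Theorem~\ref{MLYthm} for an arbitrary $X$ and uses separability only for the ``densely'' version, so I would rely on it as stated. If needed, I would also sketch the implications directly. The implication iv)$\Rightarrow$iii) is immediate. For iii)$\Rightarrow$ii), Theorem~\ref{CBthm} (no separability needed) gives a residual set on which $\sup_N \frac1N\sum_{j=1}^N\|T^jy\|=\infty$. Combined with $Y=H^\infty$, every such vector is absolutely mean irregular, because the $\liminf$ is $0$ and the $\limsup$ must then be $\infty$: a bounded $\limsup$ would bound the whole sequence of Ces\`aro means. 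For ii)$\Rightarrow$i), use the cited theorem, or take the line spanned by an absolutely mean irregular vector as an uncountable mean Li--Yorke set. Finally, i)$\Rightarrow$iv) holds by taking $y_0=x-y$ for a mean Li--Yorke pair $(x,y)$.
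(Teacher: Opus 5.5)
Your proposal is correct and follows the paper's proof: the sup-norm estimate from Lemma~\ref{lemCinf}, together with the hypothesis, shows that the set $Y$ in Theorem~\ref{MLYthm} is all of $H^\infty$, and that theorem (which does not require separability) then gives the equivalences. One caveat in your optional direct sketch: iv)$\Rightarrow$iii) is not literally immediate, since the identity operator satisfies iv) and is absolutely Ces\`aro bounded; it does follow from $Y=H^\infty$, because if $T$ were absolutely Ces\`aro bounded with constant $C$, choosing $m$ with $\|T^m y_0\|_\infty<\varepsilon$ gives $\limsup_{N}\frac1N\sum_{j=1}^N\|T^jy_0\|_\infty\le C\varepsilon$ for every $\varepsilon>0$.
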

\begin{proof}
By Lemma~\ref{lemCinf} and the hypothesis, $\displaystyle\liminf_{N\to\infty}\frac{1}{N}\sum_{j=1}^N\|(C_{w,\varphi})^j f\|_\infty = 0$ for all $f \in H^\infty$. The conclusion follows from Theorem~\ref{MLYthm}.
\end{proof}

\begin{corollary}
Let \( C_{w,\varphi} \) be a weighted composition operator on \( H^\infty \). Then \( C_{w,\varphi} \) is mean Li-Yorke chaotic if  there is an increasing sequence $(N_k)_{k\in\mathbb{N}}$ of positive integers such that 
\[
\displaystyle\lim_{k \to \infty}\dfrac{1}{N_k}\sum_{j=1}^{N_k} \| w^{(j)} \|_\infty = 0 \quad \text{and}\quad \displaystyle\sup_{N\in \mathbb{N}}  \dfrac{1}{N}\sum_{j=1}^{N}\| w^{(j)} \|_\infty=\infty.
\]

\end{corollary}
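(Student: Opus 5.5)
This follows the pattern of Corollary~\ref{sufMLYHP}, with Theorem~\ref{MLYHinf} replacing Theorem~\ref{sufMLY}. First, the condition $\lim_{k\to\infty}\frac{1}{N_k}\sum_{j=1}^{N_k}\|w^{(j)}\|_\infty=0$ along the increasing sequence $(N_k)$ gives
\[
\liminf_{N\to\infty}\frac{1}{N}\sum_{j=1}^{N}\|w^{(j)}\|_\infty=0,
\]
which is exactly the hypothesis of Theorem~\ref{MLYHinf}.

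It therefore remains to verify one of the equivalent conditions (iii) or (iv) of that theorem. I will test with the constant function $1\in H^\infty$. Since $(C_{w,\varphi})^j1=w^{(j)}\cdot(1\circ\varphi^j)=w^{(j)}$, we have
\[
\frac1N\sum_{j=1}^N\|(C_{w,\varphi})^j1\|_\infty=\frac1N\sum_{j=1}^N\|w^{(j)}\|_\infty .
\]
The second hypothesis says the supremum of these averages over $N$ is infinite. By Theorem~\ref{CBthm}, (ii)$\Rightarrow$(i), $C_{w,\varphi}$ is then not absolutely Ces\`aro bounded, which is condition (iii) of Theorem~\ref{MLYHinf}.

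Theorem~\ref{MLYHinf}, (iii)$\Rightarrow$(i), now gives that $C_{w,\varphi}$ is mean Li--Yorke chaotic. One point to check is that Theorem~\ref{CBthm} is stated for an arbitrary Banach space, so the non-separability of $H^\infty$ causes no trouble there; only the dense mean Li--Yorke conclusion would need separability. Alternatively, I could go straight to (iv) of Theorem~\ref{MLYHinf} with $y_0=1$. The averages are finite for each fixed $N$ and their supremum is infinite, so they are unbounded along some sequence $N\to\infty$, and in particular their $\limsup$ is positive. There is no real obstacle.
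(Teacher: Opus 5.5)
Your proof is correct and is the intended argument. The paper gives no separate proof of this corollary, but it is the proof of Corollary~\ref{sufMLYHP} with Theorem~\ref{MLYHinf} in place of Theorem~\ref{sufMLY}, using the test function $1$ and Theorem~\ref{CBthm}, exactly as you do.

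A side remark: the corollary is in fact vacuously true. The identity $w^{(m+n)}=w^{(m)}\cdot(w^{(n)}\circ\varphi^m)$ makes $\|w^{(n)}\|_\infty$ submultiplicative. So the first hypothesis forces $\|w^{(m)}\|_\infty<1$ for some $m$, and then $\sum_n\|w^{(n)}\|_\infty<\infty$, which contradicts the second hypothesis. The two conditions can therefore never hold at the same time.
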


\section{Li-Yorke chaotic weighted compostion operators on \texorpdfstring{$H^p$}{}  and \texorpdfstring{$A^2_\beta$}{} }\label{Sec5}
This section is devoted to examples. We present an example illustrating the results established in Section \ref{Sec3} for weighted composition operators on Hardy and Bergman spaces. On the other hand, we provide an example where the condition in Theorem~\ref{sufLY} is not strictly necessary to obtain Li-Yorke chaos.\vspace{0.3cm}

Consider the weight function $w_\lambda(z) = \lambda z$. A straightforward computation shows that
$$
\|w_\lambda^{(n)}\|_\infty \leq |\lambda|^n,
$$
so in particular $\|w_\lambda^{(n)}\|_\infty \to 0$ whenever $|\lambda| < 1$. Next, consider the family of affine self-maps of the unit disk given by
$$
\varphi_a(z) = az + 1 - a, \quad 0 < a < 1.
$$
\begin{proposition}
Let $0 < a < 1$ and suppose that $a^{1/2} < |\lambda| < 1$. Then the weighted composition operator $C_{w_\lambda,\varphi_a}$ is densely Li--Yorke chaotic on $A^2_\beta$ and $H^p$ for $1\leq p\leq2$.
\end{proposition}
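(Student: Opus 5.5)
The plan is to apply Theorem~\ref{sufLY} with $X=A^2_\beta$ or $X=H^p$, $1\le p\le 2$. Conditions (\ref{C1}) and (\ref{C2}) hold for these spaces by the previous subsections, so two things must be checked: $\liminf_n\|w_\lambda^{(n)}\|_X=0$, and $C_{w_\lambda,\varphi_a}$ is not power bounded.

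\emph{Step 1: the liminf.} In both spaces $\|g\|_X\le C\|g\|_\infty$ for $g\in H^\infty$, and the computation above gives $\|w_\lambda^{(n)}\|_\infty\le|\lambda|^n$. Hence
\[
\|w_\lambda^{(n)}\|_X\le C|\lambda|^n\to0,
\]
since $|\lambda|<1$. Because the weight iterates themselves tend to zero, Corollary~\ref{sufHP} cannot be used. Non-power-boundedness has to come from the action of the operator on other functions, which Step 2 supplies.

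\emph{Step 2: non-power-boundedness via point evaluations.} For $u\in\mathbb{D}$, let $\delta_u$ be the evaluation functional on $X$. Since $(C_{w,\varphi})^nf(z)=w^{(n)}(z)f(\varphi^n(z))$, we have $\delta_z\circ(C_{w,\varphi})^n=w^{(n)}(z)\,\delta_{\varphi^n(z)}$. Taking norms gives
\[
\|(C_{w,\varphi})^n\|\ \ge\ |w^{(n)}(z)|\,\frac{\|\delta_{\varphi^n(z)}\|}{\|\delta_z\|}\qquad (z\in\mathbb{D}).
\]
The norms of the evaluation functionals are known exactly: $\|\delta_u\|_{(H^p)^*}=(1-|u|^2)^{-1/p}$ and $\|\delta_u\|_{(A^2_\beta)^*}=(1-|u|^2)^{-(2+\beta)/2}$. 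In both cases this equals $(1-|u|^2)^{-\gamma}$ with $\gamma\ge\tfrac12$; for $H^p$ this is exactly where $p\le2$ is used.

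\emph{Step 3: estimates along a real orbit.} Fix a real $z\in(0,1)$, say $z=1/2$, and set $\delta=1-z$. Then
\[
\varphi_a^k(z)=1-a^k\delta\in(0,1),\qquad w_\lambda^{(n)}(z)=\lambda^n\prod_{k=0}^{n-1}(1-a^k\delta).
\]
Since $\sum_k a^k\delta<\infty$ and every factor is at least $1-\delta=1/2$, the product is bounded below by a constant $c>0$ independent of $n$. Next, $1-\varphi_a^n(z)^2=a^n\delta(1+\varphi_a^n(z))\le 2a^n\delta$, so $\|\delta_{\varphi_a^n(z)}\|\ge(2a^n\delta)^{-\gamma}$, while $\|\delta_z\|$ is a fixed constant. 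Combining these bounds,
\[
\|(C_{w_\lambda,\varphi_a})^n\|\ \ge\ c'\,\bigl(|\lambda|\,a^{-\gamma}\bigr)^n\ \ge\ c'\,\bigl(|\lambda|\,a^{-1/2}\bigr)^n\longrightarrow\infty,
\]
because $|\lambda|>a^{1/2}$. So the operator is not power bounded, and Theorem~\ref{sufLY} yields dense Li--Yorke chaos.

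The main obstacle is Step 3: the orbit of $z$ must approach the boundary fixed point $1$ at the geometric rate $a^n$, which makes the kernel norms blow up like $a^{-n\gamma}$, while the weight product stays bounded below. Both follow from the explicit affine form of $\varphi_a$. The remaining point to cite is the exact point-evaluation norms in $H^p$ and $A^2_\beta$.
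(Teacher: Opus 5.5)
Your proof is correct, but it establishes non-power-boundedness by a different route from the paper. The paper starts from Carmo--Noor's result that $C_{\varphi_a}$ has eigenvectors $f_\mu=(z-1)^s\in H^2$ for every real eigenvalue $1<\mu<a^{-1/2}$. It picks such a $\mu$ with $\mu|\lambda|>1$. Since $w_\lambda\circ\varphi_a^{k}\to\lambda$ uniformly on $\mathbb{D}$, it gets the pointwise bound $|(C_{w_\lambda,\varphi_a})^n f_\mu|\ge(\mu\gamma)^{n-n_0}|(C_{w_\lambda,\varphi_a})^{n_0}f_\mu|$ on $\mathbb{D}$. It then transfers this to $H^p$ ($p\le 2$) and $A^2_\beta$ using the inclusions $H^2\subset H^p$ and $H^2\subset A^2_\beta$ and the monotonicity of the norms in $|f|$.

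You instead dualize. The identity $(C_{w,\varphi}^n)^*\delta_z=w^{(n)}(z)\,\delta_{\varphi^n(z)}$, together with the blow-up of point-evaluation norms along the real orbit $1-a^n\delta$, gives $\|C_{w_\lambda,\varphi_a}^n\|\ge c'(|\lambda|a^{-\gamma})^n$ directly.

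The paper's argument produces an explicit vector with exponentially growing orbit. It relies, however, on an external spectral result and on the space containing $H^2$. Yours is self-contained: you only need the elementary lower bound on $\|\delta_u\|$, for instance via the test function $(1-|u|^2)^{1/p}(1-\bar u\zeta)^{-2/p}$ in $H^p$, or the reproducing kernel in $A^2_\beta$. An explicit vector with unbounded orbit is not needed, because Banach--Steinhaus inside Lemma~\ref{eqLYH2} supplies one.

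Your method also fits the exponent $\gamma$ to each space automatically, so it gives the natural thresholds at no extra cost: $|\lambda|>a^{1/p}$ on $H^p$ for every $1\le p<\infty$, and the weaker requirement $|\lambda|>a^{(2+\beta)/2}$ on $A^2_\beta$. The paper's transfer through $H^2$ does not reach these.
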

\begin{proof}
First, choose $\mu\in \mathbb{R}$ such that $1 < \mu < a^{-1/2}$ and $|\mu\cdot\lambda| > 1$. It was shown in \cite{CarmoNoor2022} that the point spectrum of the composition operator $C_{\varphi_a}$ on the Hardy space $H^2$ contains the annulus $\{\mu \in \mathbb{C} : 0 < |\mu| < a^{-1/2}\}$. Then there exists $f_\mu \in H^2$ satisfying $C_{\varphi_a}f_\mu = \mu f_\mu$. Since $C_{\varphi_{a^n}}w_\lambda - \lambda = \lambda a^n(z-1)$, the sequence $C_{\varphi_{a^n}}w_\lambda$ converges to the constant function $\lambda$ in the $H^\infty$-norm. Because $|\mu\cdot\lambda|>1$, we can find $n_0 \in \mathbb{N}$ and $\gamma < |\lambda|$ with $| \mu\cdot \gamma| > 1$, such that $|C_{\varphi_{a^n}}w_\lambda(z)| > \gamma$ for all $z \in \mathbb{D}$ and all $n > n_0$.

For $n > n_0$, this yields the pointwise estimate
$$
|(C_{w_\lambda,\varphi_a})^n f_\mu(z)| \geq |\mu\cdot\gamma|^{n-n_0}\,|(C_{w_\lambda,\varphi_a})^{n_0}f_\mu(z)|,
$$
and since $(C_{w_\lambda,\varphi_a})^{n_0}f$ is not identically zero, passing to the $H^2$-norm gives
$$
\|(C_{w_\lambda,\varphi_a})^n f_\mu\|_2 \geq |\mu\cdot\gamma|^{n-n_0}\,\|(C_{w_\lambda,\varphi_a})^{n_0}f_\mu\|_2.
$$
As $|\mu\cdot\gamma|>1$, the right-hand side grows without bound, showing that $C_{w_\lambda,\varphi_a}$ is not power bounded. Theorem~\ref{sufLY} then implies that $C_{w_\lambda,\varphi_a}$ is densely Li--Yorke chaotic on $H^2$. Finally, since $H^2 \subseteq  A^2_{\beta}$ and $H^2 \subseteq H^p$ for $1\leq p\leq 2$, the same argument carries over to show that $C_{w_\lambda,\varphi_a}$ is densely Li--Yorke chaotic on $ A^2_{\beta}$ and $H^p$ for all $1\leq p\leq 2$.

\end{proof}

In what follows, we show that the condition $\|w^{(n)}\|_X \to 0$ in Theorem~\ref{sufLY} is not necessary for a weighted composition operator to be Li--Yorke chaotic.

\begin{proposition}
Let $0 < a < 1$. Then the composition operator $C_{\varphi_a}$ is Li--Yorke chaotic on $H^2$ and $ A^2_{\beta}$.    
\end{proposition}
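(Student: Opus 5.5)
The plan is to verify the Li--Yorke Chaos Criterion of Theorem~\ref{LYCC} directly, using explicit eigenfunctions of $C_{\varphi_a}$. Theorem~\ref{sufLY} cannot be used here: for $w\equiv 1$ we have $\|w^{(n)}\|_X=\|1\|_X$ for every $n$, so its hypothesis fails.

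\textbf{Eigenfunctions.} Since $1-\varphi_a(z)=a(1-z)$, for every $s\in\mathbb{C}$ the function $e_s(z):=(1-z)^s$ (principal branch, well defined since $\operatorname{Re}(1-z)>0$ on $\mathbb{D}$) satisfies
\[
C_{\varphi_a}e_s=a^s e_s .
\]
The standard integrability estimate $|e_s(z)|\le |1-z|^{\operatorname{Re}s}e^{\pi|\operatorname{Im}s|/2}$ shows the following. We have $e_s\in H^2$ whenever $\operatorname{Re}s>-1/2$, and $e_s\in A^2_\beta$ whenever $\operatorname{Re}s>-(2+\beta)/2$. The second range contains the first, so the same argument works in both spaces; below I write $X$ for either space and $\Omega=\{\operatorname{Re}s>-1/2\}$.

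\textbf{Choice of $X_0$.} Take $X_0=\{e_s:\operatorname{Re}s>0\}$ and the full sequence $n_k=k$. Then
\[
\|C_{\varphi_a}^n e_s\|_X=a^{n\operatorname{Re}s}\|e_s\|_X\to 0,
\]
so condition (a) of the LYCC holds.

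\textbf{Condition (b).} Let $Y=\operatorname{span}X_0$. Since each $C_{\varphi_a}^n$ is bounded, $\|C_{\varphi_a}^n|_Y\|=\|C_{\varphi_a}^n|_{\overline{Y}}\|$, so it suffices to find a vector in $\overline{Y}$ with unbounded orbit. I will show that $\overline{Y}$ contains $e_{s_0}$ for some $s_0$ with $-1/2<\operatorname{Re}s_0<0$. Then
\[
\|C_{\varphi_a}^n e_{s_0}\|_X=a^{n\operatorname{Re}s_0}\|e_{s_0}\|_X\to\infty,
\]
which gives $\sup_n\|C_{\varphi_a}^n|_Y\|=\infty$, and Theorem~\ref{LYCC} yields Li--Yorke chaos.

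\textbf{Density via analyticity in $s$.} Suppose a bounded linear functional $\Lambda\in X^*$ annihilates $Y$, and set $g(s)=\Lambda(e_s)$. If the map $s\mapsto e_s\in X$ is holomorphic on the connected domain $\Omega$, then $g$ is holomorphic on $\Omega$. It vanishes on the open set $\{\operatorname{Re}s>0\}$, so $g\equiv0$ on $\Omega$. By Hahn--Banach, $e_s\in\overline{Y}$ for every $s\in\Omega$, in particular for $s_0$ as above.

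\textbf{Main obstacle.} The one genuinely technical step is the vector-valued holomorphy of $s\mapsto e_s$ into $H^2$ and $A^2_\beta$. I would prove it by showing that the difference quotients $(e_{s+h}-e_s)/h$ converge in norm to $\log(1-z)\,e_s$ as $h\to0$. Pointwise, $|(e_{s+h}-e_s)/h|\le |\log(1-z)|\,|1-z|^{\operatorname{Re}s-\delta}C$ for $|h|<\delta$. For $\operatorname{Re}s-\delta>-1/2$ the right-hand side is square integrable on $\mathbb{T}$, and a fortiori on $\mathbb{D}$ against $dA_\beta$. For $H^2$ I would use boundary values on $\mathbb{T}$, since the functions involved lie in $H^2$ and the $H^2$-norm equals the $L^2(\mathbb{T})$-norm of boundary values. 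Dominated convergence then gives norm convergence.

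Alternatively, it suffices to check weak holomorphy: $s\mapsto\Lambda(e_s)$ is holomorphic for each $\Lambda$, together with local boundedness of $\|e_s\|_X$. I would check this against the reproducing kernels, which span a dense set, combined with the local uniform norm bound and Vitali/Montel.
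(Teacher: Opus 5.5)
Your proof is correct. It departs from the paper's at the one step that matters: putting an eigenfunction with $-1/2<\operatorname{Re}s<0$ into the closed span of vectors whose orbits tend to $0$. Using the LYCC directly instead of Proposition~\ref{GenLY} is only cosmetic, since the latter is derived from the former. Likewise the paper's $(z-1)^s$ differs from your $(1-z)^s$ only by a constant factor.

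The paper handles this step algebraically. Multiplicativity of $C_{\varphi_a}$ gives $C_{\varphi_a}^n(f_{1/4}z^k)=a^{n/4}f_{1/4}\,\varphi_{a^n}^k\to 0$, so $f_{1/4}p\in\mathrm{NS}(C_{\varphi_a})$ for every polynomial $p$. Since $f_{1/4}\in H^\infty$ is a bounded multiplier, $f_{1/4}H^2\subseteq\mathcal{X}$. The factorization $f_{-1/12}=f_{1/4}f_{-1/3}$ with $f_{-1/3}\in H^2$ then finishes. This needs nothing beyond boundedness of $f_{1/4}$ and density of polynomials.

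Your Hahn--Banach/identity-theorem argument costs a vector-valued holomorphy lemma. In return it uses only holomorphic dependence of the eigenvectors on $s$, not the multiplicative structure of $C_{\varphi_a}$. That makes it the standard eigenvector-field technique, which transfers to operators without that structure. It also proves more: every $e_s$ with $\operatorname{Re}s>-1/2$ lies in $\overline{\operatorname{span}}\{e_s:\operatorname{Re}s>0\}$.

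For the technical step, your second option is the cleanest, and it needs no Vitali/Montel.
\begin{itemize}
\item For $\Lambda$ a finite combination of point evaluations, $\Lambda(e_s)=\sum_i c_i(1-z_i)^s$ is entire.
\item Such functionals are norm-dense in $X^*$, since reproducing kernels span a dense subspace.
\item $\|e_s\|_X$ is locally bounded on $\Omega$ by your pointwise estimate.
\end{itemize}
So for every $\Lambda\in X^*$, $s\mapsto\Lambda(e_s)$ is a locally uniform limit of entire functions, hence holomorphic. This also spares you checking that $\log(1-z)\,e_s\in H^2$. The $A^2_\beta$ case reduces to the $H^2$ case by composing $\Lambda$ with the bounded inclusion $H^2\hookrightarrow A^2_\beta$.
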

\begin{proof}
In \cite{CarmoNoor2022} it was shown that the functions $f_s(z)=(z-1)^s$ are eigenvectors of $C_{\varphi_a}$ and satisfy
\[
C_{\varphi_a}f_s=a^s f_s,
\]
where $f_s\in H^2$ if and only if $\operatorname{Re}(s)>-\frac{1}{2}$. It is straightforward to verify that 
\begin{itemize}
\item $\|C_{\varphi_a}^n f_s\|_2 \to 0$ as $n \to \infty$ if $\mathrm{Re}(s) > 0$.

\item $\|C_{\varphi_a}^n f_s\|_2 \to \infty$ as $n \to \infty$ if $\mathrm{Re}(s) < 0$.
\end{itemize}
Let us check that, for each $k\in\mathbb{N}$, $f_{1/4}z^k\in \text{NS}(C_{\varphi_a})$. Indeed:
\begin{align*}
    \| C_{\varphi_a}^nf_{1/4}z^k\|_2 &=\| C_{\varphi_a}^nf_{1/4}C_{\varphi_a}^nz^k\|_2=\| a^{n/4}f_{1/4}C_{\varphi_a}^nz^k\|_2\\ &\leq a^{n/4}\|f_{1/4}\|_\infty\| C_{\varphi_{a^n}}z^k\|_\infty \\ &= a^{n/4}\|f_{1/4}\|_\infty\| (a^nz+1-a^n)^k\|_\infty\\ &\leq a^{n/4}\|f_{1/4}\|_\infty (a^n+1)^k.
\end{align*}
Therefore, $\| C_{\varphi_a}^nf_{1/4}z^k\|_2\rightarrow0$ as $n\rightarrow\infty$. Since $f_{1/4}\in H^\infty$, it follows that $$f_{1/4} H^2\subseteq \mathcal{X}=\overline{\operatorname{span}}(\mathrm{NS}(C_{\varphi_a})).$$ In particular, $f_{-1/12}=f_{1/4} f_{-1/3}\in \mathcal{X}$,
and since 
\[
\left\| C_{\varphi_a}^{\,n} f_{-1/12} \right\|_2 \to \infty
\quad \text{as } n \to \infty,
\]
we conclude that $C_{\varphi_a}$ is Li--Yorke chaotic by Proposition \ref{GenLY}. The same argument holds for $A^2_{\beta}$.
\end{proof}

Recall that $\mathbb{C}_{+}:=\{z\in \mathbb{C}: \text{Re}(z)>0\}$ represents the open right half-plane. In \cite[Theorem~10]{AlvarezHenriquez2025} it was proved that no composition operator with linear fractional symbol on the Hardy space $H^{2}(\mathbb{C}_{+})$ can be Li-Yorke chaotic. We now exhibit a family of Li--Yorke chaotic operators which are a constant multiple of composition operators on $H^2(\mathbb{C}_+)$.

\begin{corollary}
Let $0 < a < 1$. Then the operator $a^{-1}C_{\psi_a}$ is Li--Yorke chaotic on $H^2(\mathbb{C}_+)$, where
\[
\psi_a(w) = a^{-1}w + (a^{-1} - 1).
\].      
\end{corollary}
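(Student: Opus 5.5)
Transfer the previous proposition from the disk to the half-plane by a unitary equivalence, using the fact that Li--Yorke chaos is preserved under conjugation by an isomorphism.

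\textbf{Step 1: the unitary.} Let $\sigma(z)=\frac{1+z}{1-z}$ be the Cayley transform, which maps $\mathbb{D}$ onto $\mathbb{C}_+$ and sends $1$ to $\infty$. The standard unitary $U\colon H^2(\mathbb{C}_+)\to H^2(\mathbb{D})$ is, up to a normalizing constant,
\[
(UF)(z)=\frac{1}{1-z}\,F(\sigma(z)).
\]

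\textbf{Step 2: locate the conjugate symbol.} Compute $\psi=\sigma\circ\varphi_a\circ\sigma^{-1}$. Since $\varphi_a$ fixes $1$ and $\sigma(1)=\infty$, $\psi$ fixes $\infty$, so it is affine: $\psi(w)=\alpha w+\beta$. Using $1-\varphi_a(z)=a(1-z)$, one finds
\[
\sigma(\varphi_a(z))=\frac{2-a(1-z)}{a(1-z)}=a^{-1}\frac{2}{1-z}-1,
\qquad
\frac{2}{1-z}=\sigma(z)+1 .
\]
Hence $\psi(w)=a^{-1}(w+1)-1=a^{-1}w+(a^{-1}-1)=\psi_a(w)$. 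This agreement is a good sign that the intended statement is correct.

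\textbf{Step 3: intertwine and identify the weight.} Compute $U C_{\psi_a}U^{-1}$. For $f=UF$,
\[
U C_{\psi_a}F(z)=\frac{1}{1-z}\,F\bigl(\sigma(\varphi_a(z))\bigr)
=\frac{1-\varphi_a(z)}{1-z}\,f(\varphi_a(z))
=a\,f(\varphi_a(z)).
\]
So $UC_{\psi_a}U^{-1}=a\,C_{\varphi_a}$, and therefore $a^{-1}C_{\psi_a}$ is unitarily equivalent to $C_{\varphi_a}$ on $H^2(\mathbb{D})$.

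\textbf{Step 4: conclude.} By the previous proposition, $C_{\varphi_a}$ is Li--Yorke chaotic on $H^2$. If $x$ is an irregular vector for $C_{\varphi_a}$, then $U^{-1}x$ is irregular for $a^{-1}C_{\psi_a}$, because $U$ is an isometric isomorphism and $\|(a^{-1}C_{\psi_a})^nU^{-1}x\|=\|C_{\varphi_a}^nx\|$. By \cite[Theorem~9]{Bernardes2015}, $a^{-1}C_{\psi_a}$ is then Li--Yorke chaotic on $H^2(\mathbb{C}_+)$.

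\textbf{Main obstacle.} The delicate point is Step 3: the normalizing factor $(1-z)^{-1}$ in $U$ produces a scalar, and one must get it exactly right. It must come out as $a$, so that it cancels the $a^{-1}$ in the statement. If a different normalization of $U$ is used, the outcome is still a scalar times $C_{\varphi_a}$, and the argument goes through as long as that scalar is tracked correctly.
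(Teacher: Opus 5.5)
Your proposal is correct and follows the same route as the paper. The paper cites \cite{CarmoNoor2022} for the unitary equivalence between $C_{\varphi_a}$ on $H^2$ and $a^{-1}C_{\psi_a}$ on $H^2(\mathbb{C}_+)$ and then relies on the preceding proposition. You instead verify that equivalence explicitly through the Cayley transform (your identity $UC_{\psi_a}U^{-1}=aC_{\varphi_a}$ is right, and the normalizing constant of $U$ cancels), and you also spell out why irregular vectors transfer under the unitary.
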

\begin{proof}
According to \cite{CarmoNoor2022}, \( C_{\varphi_a} \) on \( H^2\) is unitarily equivalent to $a^{-1} C_{\psi_a}$ on $H^2(\mathbb{C}_+)$.
\end{proof}

\bibliography{Bibliography}
\bibliographystyle{acm}

\end{document}